\definecolor{lightgray}{gray}{1}
\newtheorem{theorem}{Theorem}[section]
\newtheorem{corollary}[theorem]{Corollary}
\newtheorem{proposition}[theorem]{Proposition}
\newtheorem{lemma}[theorem]{Lemma}
\theoremstyle{remark}
\newtheorem{remark}[theorem]{Remark}
\def\blfootnote{\xdef\@thefnmark{}\@footnotetext}
\DeclarePairedDelimiterX\MeijerM[3]{\lparen}{\rparen}{\begin{matrix}#1 \\ #2\end{matrix}\delimsize\vert\,#3}
\newcommand\MeijerG[7]{ G^{\,#1,#2}_{#3,#4}\MeijerM*{#5}{#6}{#7}}
\newcommand{\HyperG}[5]{{}_{#1}F_{#2}\MeijerM*{#3}{#4}{#5}}
\newcommand{\ds}{\displaystyle}
\DeclareMathOperator{\Tr}{Tr}
\DeclareMathOperator{\Real}{Re}
\DeclareMathOperator{\Imag}{Im}
\renewcommand{\Re}{\Real}
\renewcommand{\Im}{\Imag}
\DeclareMathOperator*{\Res}{Res}
\newcommand{\R}{\mathbb{R}}
\numberwithin{equation}{section}
\title{Singular values of products of random matrices and polynomial ensembles}
\author{Arno B. J. Kuijlaars and Dries Stivigny}
\date{\today}
\begin{document}

\maketitle
\blfootnote{KU Leuven, Department of Mathematics, Celestijnenlaan 200B box 2400, 3001 Leuven, Belgium. 
E-mail: arno.kuijlaars@wis.kuleuven.be, dries.stivigny@wis.kuleuven.be }
%

\begin{abstract}
Akemann, Ipsen, and Kieburg showed recently that the squared singular values of  a product of 
$M$ complex Ginibre matrices are distributed according to a determinantal point process. 
We introduce the notion of a polynomial ensemble and show how their result can be 
interpreted as a transformation of polynomial ensembles. 
We also show that the squared singular values of the product of $M-1$ complex Ginibre matrices 
with one truncated unitary matrix is a polynomial ensemble, and we derive a double integral 
representation for the correlation kernel associated with this ensemble. We use this to 
calculate the scaling limit at the hard edge, which turns out to be the same scaling limit 
as the one found by Kuijlaars and Zhang for the squared singular values of a product of $M$ 
complex Ginibre matrices. 
Our final result is that these limiting kernels also appear as scaling limits for  the 
biorthogonal ensembles of Borodin with parameter $\theta > 0$, in case $\theta$ or $1/\theta$
is an integer. 
This further supports the conjecture that these kernels have a universal character.
\end{abstract}

\section{Introduction}

\subsection{Products of Ginibre matrices}
There is a remarkable recent development in the understanding of the structure of eigenvalues
and singular values of products of complex Ginibre matrices at the finite size level.
Both the eigenvalues and the singular values turn out to have a determinantal structure. 
For the eigenvalues this was shown by Akemann and Burda \cite{Akemann_Burda}. Related results
that involve also products with inverses of complex Ginibre matrices are in 
\cite{Krishnapur, Adhikari_Reddy_Reddy_Saha},
and products with truncated unitary matrices in \cite{Akemann_Burda_Kieburg_Nagao, Ipsen_Kieburg}.

The eigenvalue probability density function in these models takes the form 
\[ \frac{1}{Z_n} \left| \Delta(z) \right|^2 \, \prod_{j=1}^n w(z_j), \qquad  (z_1, \ldots, z_n) \in \mathbb C^n, \]
where $\Delta(z) = \prod_{j<k} (z_k-z_j)$ is the Vandermonde determinant, and $w$ is 
a weight function that is expressed in terms of a Meijer G-function (see e.g.\ \cite{Beals_Szmigielski, Luke} 
or the appendix for an introduction).

The determinantal structure also holds for the squared singular values of products $Y = G_M \cdots G_1$ of
$M \geq 1$ complex Ginibre matrices. Suppose $G_j$ has size $(n + \nu_j) \times (n + \nu_{j-1})$ with $\nu_0 =0$,
$\nu_1, \ldots, \nu_M \geq 0$. Then the joint probability density function takes the form
\begin{equation} \label{eq: jpdf_gin}
	\frac{1}{Z_n} \Delta(y)  \det\left[w_{k-1}(y_j)\right]_{j, k = 1}^n, \qquad (y_1, \ldots, y_n) \in [0,\infty)^n,
\end{equation}
where the $y_j$'s  are the squared singular values of $Y$, and
\begin{equation} \label{wk1}
	w_k(y) = \MeijerG{M}{0}{0}{M}{-}{\nu_M, \ldots, \nu_2, \nu_1 + k}{y}, 
\end{equation}
is again a Meijer G-function. This was shown by Akemann, Kieburg, and Wei \cite{Akemann_Kieburg_Wei}
in the case of square matrices, and by Akemann, Ipsen, and Kieburg \cite{Akemann_Ipsen_Kieburg}
for general rectangular matrices. 

\subsection{Polynomial ensembles}
The density \eqref{eq: jpdf_gin} defines a biorthogonal ensemble which is a special case of a determinantal
process. Because of the Vandermonde determinant $\Delta(y)$ in \eqref{eq: jpdf_gin} there is a connection
with polynomials and we call \eqref{eq: jpdf_gin} a \textbf{polynomial ensemble}.
A general polynomial ensemble is of the form
\begin{equation} \label{eq: pol_ensemble}
	\frac{1}{Z_n} \Delta(x)  \det\left[f_{k-1}(x_j)\right]_{j, k = 1}^n, \qquad (x_1, \ldots, x_n) \in \mathbb R^n,
\end{equation}
for certain functions $f_0, \ldots, f_{n-1}$.
In such an ensemble the correlation kernel is
\[ K_n(x,y) = \sum_{k=0}^{n-1} P_k(x) Q_k(y) \]
where $P_k$ is a monic polynomial of degree $k$ such that
\begin{equation} \label{eq: pol_ensemble_Pk} 
	\int_0^{\infty} P_k(x) f_j(x) dx = 0 \qquad \text{for } j = 0, 1, \ldots, k-1, \text{ and } k = 0, 1, \ldots, n,
	\end{equation}
and $Q_k$ is in the linear span of $f_0, \ldots, f_k$ such that
\[ \int_0^{\infty} x^j Q_k(x) dx = \delta_{j,k} \qquad \text{for } j = 0, 1, \ldots, k,
	\text{ and } k = 0, 1, \ldots, n-1. \]
If $f_k(x) = x^k f_0(x)$ for every $k$, then \eqref{eq: pol_ensemble} is an orthogonal
polynomial ensemble \cite{Konig} and \eqref{eq: pol_ensemble_Pk} reduces
to the conditions for an orthogonal polynomial with respect to $f_0$.
It is also worth noting that in a polynomial ensemble, $P_n$ is the average characteristic polynomial
\[ P_n(x) = \mathbb E \left[ \prod_{j=1}^n (x-x_j) \right] \]
with the expectation taken over \eqref{eq: pol_ensemble}.

The first aim of the present work is to interpret the result of Akemann et al.\ as
a transformation of polynomial ensembles. The result may be stated as follows:
suppose $X$ is a random matrix whose squared singular values form a polynomial
ensemble and that $G$ is a complex Ginibre matrix. Then the squared singular values of $Y = GX$ are also a polynomial
ensemble. See Theorem \ref{thm: distr_sv_prod}  below for a precise formulation. 

We can use the theorem repeatedly, and we obtain that the squared singular
values of $Y = G_{M-1} \cdots G_1 X$ are also a polynomial ensemble, for any
$M \geq 1$ and complex Ginibre matrices $G_1, \ldots, G_{M-1}$.

The theorem applies to any random matrix $X$ whose squared singular values are
a polynomial ensemble. Taking for $X$ a complex Ginibre matrix itself, we obtain the
result of Akemann et al., and by taking for $X$ the inverse of a product of complex Ginibre
matrices, we rederive a recent result of Forrester \cite{Forrester}. In both these
examples, the functions in the polynomial orthogonal ensembles are expressed as
Meijer G-functions.

We  consider one new example
where $X$ is a truncation of a Haar distributed unitary matrix for which it is known that
the squared singular values are a Jacobi ensemble on $[0,1]$. We find explicit
expressions that are once again in terms of Meijer G-functions, see Corollary \ref{cor: prod_gin_trunc}.

\subsection{Scaling limits at the hard edge}

The correlation kernels $K_n$ for the polynomial ensemble \eqref{eq: jpdf_gin}-\eqref{wk1}
have an interesting large $n$ scaling limit at the hard edge
\[ \lim_{n \to\infty} \frac{1}{n} K_n \left(\frac{x}{n}, \frac{y}{n} \right) = K_{\nu_1, \ldots, \nu_M}(x,y) \]
with a limiting kernel that depends on $M$ parameters, $\nu_1, \ldots, \nu_M$,
see \cite{Kuijlaars_Zhang} and Theorem \ref{thm: scaling_KZ} below for a precise statement.

For $M=1$ they reduce to the hard edge Bessel kernels, see e.g.\ \cite{Tracy_Widom} or 
\cite[Section 7.2]{Forrester_LogGases}, and for $M=2$ these kernels already
appeared in work of Bertola et al.\ \cite{Bertola_Gekhtman_Szmigielski} 
on the Cauchy-Laguerre two matrix model. 
In \cite{Forrester} Forrester obtained the same
family of limiting kernels for the squared singular values of a product of complex Ginibre matrices
with the inverse of another product of complex Ginibre matrices.
Differential equations for the gap probabilities are in \cite{Strahov}.

For results on the global distribution of the points in \eqref{eq: jpdf_gin}-\eqref{wk1},
see e.g.\ \cite{Burda_JLNS, Forrester_Liu, Neuschel, Penson_Zyczkowski, Zhang}. 

The second aim of this paper is to provide two more examples of models with the kernels 
$K_{\nu_1, \ldots, \nu_M}$ as scaling limit. We show that the new example 
with the product of Ginibre matrices with one truncated unitary matrix falls into
this category. 
In the second example  we consider the biorthogonal ensembles
\begin{equation} \label{eq:Bor1}  
	\frac{1}{Z_n} \prod_{j < k } (x_k -x_j) \, \prod_{j < k} (x_k^{\theta} - x_j^{\theta}) \, \prod_{j=1}^n w(x_j) 
	\end{equation}
with all $x_j > 0$ and $\theta > 0$. Borodin \cite{Borodin} found the hard edge scaling limits for the cases where
$w(x) = x^{\alpha} e^{-x}$ or $w(x) = x^{\alpha} \chi_{[0,1]}(x)$. The scaling limit depends on
the two para\-meters $\theta > 0$ and $\alpha > -1$.
We show that, after suitable rescaling, the limiting kernels  belong to the class of kernels $K_{\nu_1, \ldots, \nu_M}$
provided that $\theta$ or $1/\theta$ is an integer, see Theorem \ref{thm:theta-ensemble}.

This last example in particular supports the conjecture that the kernels $K_{\nu_1, \ldots, \nu_M}$
have a universal character and that they might appear as scaling limits in other interesting random models  as well.


\section{Transformation of Polynomial Ensembles}

A complex Ginibre matrix $G$ of size $m \times n$ has independent entries whose real and
imaginary parts are independent and have a standard normal distribution. The probability distribution
can be written as 
\begin{equation} \label{eq: Ginibre} 
	\frac{1}{Z_n} e^{- \Tr G^* G} dG 
	\end{equation}
where $dG = \prod_{j=1}^m \prod_{k=1}^n d \Re G_{j,k} \, d \Im G_{j,k}$ and $Z_n$ is a normalization
constant.

The main result of this section is the following:

\begin{theorem} \label{thm: distr_sv_prod}
Let $\nu \geq 0$ and $l \geq n \geq 1$ be integers and let $G$ be a complex Ginibre random matrix of size 
$(n + \nu) \times l$. Let $X$ be a random matrix of size $l \times n$, independent of $G$, such 
that the squared singular values $x_1, \ldots, x_n$ of $X$ have a joint probability density 
function on $[0,\infty)^n$ that is proportional to
\begin{equation} \label{Xpdf}
\Delta(x) \det\left[f_{k-1}(x_j)\right]_{j, k = 1}^n,
\end{equation}
for certain functions $f_0, \ldots, f_{n-1}$.
Then the squared singular values $y_1, \ldots, y_n$ of $Y = GX$ are distributed with a joint probability density function on $[0,\infty)^n$  proportional to
\begin{equation} \label{Ypdf}
\Delta(y) \det\left[g_{k-1}(y_j)\right]_{j, k = 1}^n
\end{equation}
with
\begin{equation} \label{eq: mellin_transf_f}
g_k(y) = \int_0^{\infty} x^{\nu} e^{-x} f_k\left(\frac{y}{x}\right) \frac{dx}{x}, \qquad k = 0, \ldots, n-1.
\end{equation}
\end{theorem}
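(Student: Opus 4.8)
The plan is to compute the joint probability density of the squared singular values of $Y = GX$ by first conditioning on $X$ and integrating out the Ginibre matrix $G$, and then integrating over the distribution of $X$. The first step is a standard observation: given $X$, the matrix $Y = GX$ has, conditionally on $X$, a Gaussian-type density on the space of $(n+\nu)\times n$ matrices. More precisely, since $G$ has density proportional to $e^{-\Tr G^*G}$ and $l \geq n$, one can change variables so that $Y = GX$ depends on $G$ only through an $l \times n$ block; completing the square in the exponent shows that, conditionally on $X$, the density of $Y$ is proportional to $\det(X^*X)^{-(n+\nu)} e^{-\Tr\, Y^*(X^*X)^{-1} Y}$ (for $X$ of full rank $n$, which holds almost surely). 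In particular the conditional law of $Y$ depends on $X$ only through the positive definite matrix $X^*X$, equivalently only through the squared singular values $x_1, \ldots, x_n$ of $X$.

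Next I would pass from matrices to eigenvalues. Writing $w_j = \sigma_j(Y)^2$ for the squared singular values of $Y$ and $x_j$ for those of $X$, the key is a known integration formula (of Harish-Chandra/Itzykson--Zuber type, or more elementarily the formula for the induced measure on $Y^*Y$ from a Gaussian with covariance $X^*X$): integrating the conditional matrix density over the unitary degrees of freedom produces a conditional density for $(y_1, \ldots, y_n)$ proportional to
\[
\frac{\Delta(y)}{\Delta(x)} \, \frac{\det\left[ e^{-y_j/x_k} \right]_{j,k=1}^n}{\prod_{k=1}^n x_k^{n+\nu}},
\]
up to constants depending only on $n$ and $\nu$. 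This is the step where the determinantal structure is born: the ratio of Vandermondes together with the $\det[e^{-y_j/x_k}]$ factor is exactly what one needs so that, after multiplying by the density \eqref{Xpdf} of $X$ and integrating over $x$, the $x$-dependence factors through the columns.

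The final step is to carry out the integration over $x_1, \ldots, x_n$. Multiplying the conditional density of $y$ by the density \eqref{Xpdf} of $x$ gives, up to constants, $\Delta(y) \det\left[f_{k-1}(x_j)\right] \det\left[e^{-y_j/x_k} x_k^{-(n+\nu)}\right]$, since the two factors $\Delta(x)$ cancel. Now I would invoke the Andr\'eief (Cauchy--Binet) identity: integrating a product of two $n\times n$ determinants $\det[f_{k-1}(x_j)] \det[h_k(x_j)]$ against $\prod dx_j$ yields $n!\det\left[\int_0^\infty f_{k-1}(x) h_{m}(x)\,dx\right]_{k,m}$. Applying it with $h_m(x) = e^{-y_m/x} x^{-(n+\nu)}$ and then substituting $x \mapsto y_m/x$ inside each entry converts $\int_0^\infty f_{k-1}(x) e^{-y_m/x} x^{-(n+\nu)}\,dx$ into $y_m^{-(n+\nu)+1}\int_0^\infty x^{\nu + n - 2} e^{-x} f_{k-1}(y_m/x)\,dx$. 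Wait—more carefully: with $x = y_m/u$, $dx = -y_m u^{-2} du$, and $x^{-(n+\nu)} = (y_m/u)^{-(n+\nu)}$, the integral becomes $y_m^{1-(n+\nu)} \int_0^\infty u^{n+\nu-2} e^{-u} f_{k-1}(y_m/u)\,du$; the factor $u^{n+\nu-2}$ should be matched against \eqref{eq: mellin_transf_f}, so I expect the precise bookkeeping of exponents of $x$ in the conditional density to differ from my guess above and to be dictated by requiring the final answer to match \eqref{eq: mellin_transf_f}. In any case the $y_m$-power prefactors pull out of the determinant column by column as $\prod_m y_m^{1-(n+\nu)}$, an overall function symmetric in the $y$'s, which gets absorbed into the normalization, and what remains is exactly $\Delta(y)\det\left[g_{k-1}(y_j)\right]$ with $g_k$ as in \eqref{eq: mellin_transf_f}.

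The main obstacle is the second step: correctly deriving the conditional eigenvalue density of $Y^*Y$ given the singular values of $X$, including getting the exact power of $x_k$ (equivalently of $\det X^*X$) right, since the Jacobian of the change of variables from the matrix $G$ to $Y$ and then to eigenvalues involves several competing factors (the $l \geq n$ rectangular case, the Vandermonde Jacobian on both sides, and the $\det(X^*X)$ from completing the square). Once the exponent of $x_k$ in the conditional density is pinned down, the Andr\'eief step and the change of variables $x \mapsto y/x$ are routine and are exactly engineered to produce \eqref{eq: mellin_transf_f}; the normalization constant $Z_n$ then collects all the $n$- and $\nu$-dependent constants and the symmetric $y$-prefactor.
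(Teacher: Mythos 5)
Your route is exactly the paper's: reduce to the conditional law of $Y$ given $X$ via the Jacobian of $G \mapsto Y=GX$, integrate out the unitary degrees of freedom with a Harish-Chandra/Itzykson--Zuber integral to get a conditional density for the $y_j$'s of the form $\frac{\Delta(y)}{\Delta(x)}\det[\cdots]$, and then average over $x$ with the Andr\'eief identity followed by the substitution $x \mapsto y/x$ in each matrix entry. The reduction to $l=n$ and the final Andr\'eief step are fine.

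The genuine gap is the middle step, which is precisely the content of the paper's Lemma \ref{lemma: distr_sv_prod_fixed}, and your version of it is wrong as written, not merely undetermined up to symmetric prefactors. Two things are missing. First, passing from the matrix density $\det(X^*X)^{-(n+\nu)}e^{-\Tr Y^*(X^*X)^{-1}Y}\,dY$ to a density in $(y_1,\dots,y_n)$ requires the Jacobian of the singular value decomposition of the \emph{rectangular} $(n+\nu)\times n$ matrix $Y$, namely $dY \propto \bigl(\prod_j y_j^{\nu}\bigr)\Delta(y)^2\,dU\,dV\,dy_1\cdots dy_n$; your conditional density omits the factor $\prod_j y_j^{\nu}$ entirely. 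Second, the HCIZ formula produces a denominator $\Delta(y)\Delta(x^{-1})$, and converting $\Delta(x^{-1})$ into $\Delta(x)$ contributes $\prod_k x_k^{\,n-1}$, so the correct power is $x_k^{-(\nu+1)}$ per column, not $x_k^{-(n+\nu)}$. These corrections are not cosmetic: with your exponents the substitution $x=y_m/u$ turns each entry into $y_m^{1-(n+\nu)}\int_0^\infty u^{\,n+\nu-2}e^{-u}f_{k-1}(y_m/u)\,du$, i.e.\ a Mellin convolution against $u^{\,n+\nu-1}e^{-u}$ rather than $u^{\nu}e^{-u}$, and no symmetric prefactor can repair the kernel itself, so you would not recover \eqref{eq: mellin_transf_f}. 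Your proposal to pin down the exponents ``by requiring the final answer to match \eqref{eq: mellin_transf_f}'' is circular; the exponents have to be derived, and the two ingredients above (the rectangular SVD Jacobian and the $\Delta(x^{-1})\to\Delta(x)$ conversion) are exactly what does it.
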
 

The theorem says that left multiplication by a complex Ginibre matrix maps
polynomial ensembles to polynomial ensembles. 
Observe  that $g_k$ is the Mellin convolution \cite[formula 1.14.39]{Olver:2010} 
of $f_k$ with the function $x \mapsto  x^{\nu} e^{-x}$.

Before we prove the theorem, we state an auxiliary result, which is essentially 
contained in \cite[section 2.1]{Baik_Ben-Arous_Peche} and also in \cite{Akemann_Ipsen_Kieburg}. 
For clarity, we give a detailed proof of this result. 

\begin{lemma}  \label{lemma: distr_sv_prod_fixed}
Let $\nu, l, n$ and $G$ be as in Theorem \ref{thm: distr_sv_prod}.
Let $X$ be a non-random matrix of size $l \times n$ with non-zero squared singular values $x_1, \ldots, x_n$.
Then the squared singular values $y_1, \ldots, y_n$ of $Y = GX$ have a joint probability density function
on $[0, \infty)^n$ that is proportional to
\begin{equation} \label{Y0pdf} 
	\frac{\Delta(y)}{\Delta(x)} \det \left[ \frac{y_j^{\nu}}{x_k^{\nu+1}} e^{-y_j/x_k} \right]_{j,k=1}^n
	\end{equation}
where the proportionality constant does not depend on $X$. 
\end{lemma}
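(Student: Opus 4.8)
The plan is to compute the distribution of $Y = GX$ directly, conditioning on a singular value decomposition of $X$, and then integrate out the Ginibre density. First I would write $X = U \begin{pmatrix} \Sigma \\ 0 \end{pmatrix} V^*$ with $U \in U(l)$, $V \in U(n)$ and $\Sigma = \diag(\sqrt{x_1}, \ldots, \sqrt{x_n})$; since the squared singular values of $Y$ are invariant under $Y \mapsto U' Y V'$ for unitary $U', V'$, and since the Ginibre distribution \eqref{eq: Ginibre} is bi-unitarily invariant, I may as well replace $G$ by $GU$ and reduce to the case $X = \begin{pmatrix} \Sigma \\ 0 \end{pmatrix}$, an $l \times n$ matrix. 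Then $Y = GX = G' \Sigma$ where $G'$ is the $(n+\nu) \times n$ matrix consisting of the first $n$ columns of $G$; since the entries of $G$ are i.i.d.\ complex Gaussians, $G'$ is itself a complex Ginibre matrix of size $(n+\nu) \times n$ (and the remaining columns, which carry no information, are integrated out trivially). So the lemma reduces to the square-ish case $l = n$.

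Next, with $Y = G' \Sigma$, the map $G' \mapsto Y$ has Jacobian $\prod_{k=1}^n x_k^{-(n+\nu)}$ (each of the $n+\nu$ rows of $G'$, a vector in $\C^n$, is scaled coordinatewise by $\Sigma^{-1}$, contributing $\prod_k x_k^{-1}$ per row), so the density of $Y$ is proportional to
\begin{equation*}
 \left( \prod_{k=1}^n x_k \right)^{-(n+\nu)} e^{-\Tr(\Sigma^{-1} Y^* Y \Sigma^{-1})} = \left( \prod_{k=1}^n x_k \right)^{-(n+\nu)} e^{-\Tr(Y^* Y \Sigma^{-2})}.
\end{equation*}
Now I pass to the squared singular values $y_1, \ldots, y_n$ of $Y$ using the known formula for the distribution of the eigenvalues of $W = Y^*Y$ when $Y$ is $(n+\nu)\times n$ Ginibre-like: writing $Y = \widetilde U \begin{pmatrix} T \\ 0 \end{pmatrix} \widetilde V^*$ and integrating over the unitary groups, the density of $(y_1,\ldots,y_n)$ acquires a factor $\Delta(y)^2 \prod_j y_j^{\nu}$ from the Jacobian together with an integral over $U(n)$ of the form $\int_{U(n)} e^{-\Tr(V D V^* \Sigma^{-2})} \, dV$ with $D = \diag(y_1, \ldots, y_n)$. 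This last integral is exactly the Harish-Chandra–Itzykson–Zuber integral, which evaluates to a constant times
\begin{equation*}
 \frac{\det\left[ e^{-y_j/x_k} \right]_{j,k=1}^n}{\Delta(y)\, \Delta(1/x)}.
\end{equation*}
Collecting the factors $\Delta(y)^2 \prod_j y_j^{\nu}$, the $\Delta(y)^{-1}$ from HCIZ, the $\prod_k x_k^{-(n+\nu)}$ from the Jacobian, and $\Delta(1/x)^{-1} = \pm \Delta(x)^{-1} \prod_k x_k^{n-1}$, and distributing $\prod_j y_j^{\nu}$ and powers of $x_k$ into the determinant column-by-column, gives precisely \eqref{Y0pdf}, with a proportionality constant coming only from the Ginibre normalization and the HCIZ constant — hence independent of $X$.

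The main obstacle is the HCIZ evaluation: one needs the version of the Itzykson–Zuber formula valid here and must keep careful track of all the Vandermonde and power-of-$x_k$ factors so that the $x$-dependent prefactors combine into exactly $1/\Delta(x)$ times the stated determinant, with no leftover $x$-dependence outside the determinant. A secondary point requiring care is the reduction $l \geq n \to l = n$: one should check that when $l > n$ the extra $l - n$ columns of $G$ genuinely decouple and integrate out to an $X$-independent constant, which is immediate once $X$ is brought to the form $\begin{pmatrix}\Sigma\\0\end{pmatrix}$ but deserves an explicit word. Everything else is bookkeeping with Jacobians.
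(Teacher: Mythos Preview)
Your proposal is correct and follows essentially the same route as the paper's own proof: reduce to the case $l=n$ via unitary invariance of the Ginibre ensemble, compute the Jacobian of $G'\mapsto Y=G'\Sigma$, pass to squared singular values picking up the factor $\Delta(y)^2\prod_j y_j^{\nu}$, apply the Harish--Chandra/Itzykson--Zuber formula, and combine the prefactors using $\Delta(1/x)=(-1)^{n(n-1)/2}\Delta(x)\prod_k x_k^{-(n-1)}$. The only cosmetic difference is that the paper reduces to a general square $X_0$ rather than immediately to the diagonal $\Sigma$, but this changes nothing in the argument.
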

In case some of the  $x_k$ are the same, we have to interpret
\eqref{Y0pdf} in the appropriate limiting sense using l'H\^opital's rule.

\begin{proof} 
First we show that we can reduce the proof to the case $l = n$. 
Assume $l > n$. Then any matrix $X$ of size $l \times n$ can be written as 
\[ X = U \begin{pmatrix} X_0 \\ O \end{pmatrix} \] 
where $U$ is an $l \times l$ unitary, $X_0$ is an $n \times n$, and $O$ is a zero matrix 
of size $(l-n) \times n$. Then by the unitary invariance of Ginibre random matrix ensembles, 
we have that $Y = GX$, has the same distribution
of singular values as $Y_0  = G_0 X_0$, where $G_0$ is an $(n + \nu) \times n$ complex Ginibre matrix. 

\bigskip

So in the rest of the proof we assume that $l =n $ and $X$ is a square matrix of size $n \times n$
with squared singular values $x_1, \ldots, x_n$.

Then it is known that the change of variables $G \mapsto Y = GX$, with $X$ being fixed, where
$G$ and $Y$ are $(n +\nu) \times n$ matrices, has  a Jacobian (see e.g.\ \cite[Theorem 3.2]{Mathai})
\begin{equation} \label{JacobianGtoY}
	\det(X^{\ast} X)^{-(n + \nu)} = \prod_{k = 1}^n x_k^{-(n + \nu)}.
\end{equation}
Thus under the mapping $G \mapsto Y$
the Ginibre probability distribution \eqref{eq: Ginibre}  transforms, up to a constant, into 
\begin{equation} \label{eq: G_to_Y}
	e^{-\Tr(G^{\ast}G)} dG =  \left(\prod_{k = 1}^n x_k^{-(n + \nu)} \right) e^{-\Tr(Y^{\ast}Y (X^{\ast}X)^{-1})} dY.
\end{equation}

Next we write $Y = U \Sigma V$ in its singular value decomposition. Thus $\Sigma$ is a diagonal
matrix with the singular values along the diagonal, $V$ is a unitary matrix $n \times n$ and $U$
is an $(n + \nu) \times n$ matrix with $U^{\ast} U= I$, that is, $U$ belongs to the Stiefel
manifold $V_{n,n + \nu}$. If  we let $y_1, \ldots, y_n$ be
the squared singular values of $Y$, then it is known that
\begin{equation} \label{JacobianSVD} 
	dY \propto \left( \prod_{j=1}^n y_j^{\nu} \right) \Delta(y)^2  \, dU dV \,  dy_1 \cdots dy_n \,
	\end{equation}
where $dU$ is the invariant measure on the Stiefel manifold, and $dV$ is the Haar measure on $U(n)$,
see e.g.\ \cite{Edelman_Rao} and \cite{Zheng_Tse}\footnote{
Note that in \cite[page 10]{Edelman_Rao} the Jacobian is given in terms of the singular
values $\sigma_j = \sqrt{y_j}$ with a factor $\prod_{j=1}^n \sigma_j^{2 \nu + 1}$.  Since 
$2 \sigma_j d\sigma_j = d y_j$ we obtain \eqref{JacobianSVD} with factor  $\prod_{j=1}^n y_j^{\nu}$.}.
Combining \eqref{eq: G_to_Y} and \eqref{JacobianSVD} we obtain a probability measure proportional
to 
\begin{equation} \label{pdfUVy} 
	  \left(\prod_{k = 1}^n x_k^{-(n + \nu)} \right)\left( \prod_{j=1}^n y_j^{\nu} \right) 
			\Delta(y)^2 \, e^{-\Tr(V^{\ast} \Sigma^{\ast} \Sigma V (X^{\ast}X)^{-1})} dU dV dy_1 \cdots dy_n.
	\end{equation}

Since we are only interested in the squared singular values of $Y$, we integrate out the $U$ and $V$ part in \eqref{pdfUVy}.	
The integral over $U$ only contributes to the constant. The integration over $V$ is done by 
means of the Harish-Chandra/Itzykson-Zuber integral \cite{Harish-Chandra, Itzykson_Zuber}
\begin{equation} \label{HCIZ}
	\int_{U(n)} e^{-\Tr(V^{\ast} \Sigma^{\ast} \Sigma V (X^{\ast}X)^{-1})} dV = 
	C_n \frac{\det\left[e^{-y_j/x_k}\right]_{j, k = 1}^n}{\Delta(y) \Delta(x^{-1})},
\end{equation}
where $C_n$ is a (known) constant only depending on $n$. From \eqref{pdfUVy} and \eqref{HCIZ}
we obtain that  the density of squared singular values of $Y$ is proportional to
\begin{equation} \label{pdfy} 
	  \left(\prod_{k = 1}^n x_k^{-(n + \nu)} \right) \left( \prod_{j=1}^n y_j^{\nu} \right) 
			\frac{\Delta(y) \, \det \left[ e^{-y_j/x_k} \right]_{j,k=1}^n}{\Delta(x^{-1})}.
	\end{equation}
Using $\Delta(x^{-1}) = (-1)^{n(n-1)/2} \prod_{k=1}^n x_k^{-n+1} \Delta(x)$ and bringing the products
into the determinant, we immediately obtain  \eqref{Y0pdf} with a proportionality
constant that is independent of $x_1, \ldots, x_n$. This proves  the lemma.
\end{proof}
 
We can now prove Theorem \ref{thm: distr_sv_prod}.

\begin{proof}
Suppose that the squared singular values of $X$ have joint probability density function \eqref{Xpdf}.
Then the squared singular values are distinct almost surely, and we obtain from Lemma \ref{lemma: distr_sv_prod_fixed}, after averaging out over $X$, that the squared singular values of $Y=GX$ have a joint probability density function that is proportional to
\begin{equation} \label{pdfy3}
	\Delta(y) \int_0^{\infty} \cdots \int_0^{\infty} \det\left[ \frac{y_j^{\nu}}{x_k^{\nu + 1}} e^{-y_j/x_k}\right]_{j, k = 1}^n 
		\det\left[f_{k-1}(x_j)\right]_{j, k = 1}^n dx_1 \cdots dx_n.
\end{equation}
The multiple integral in \eqref{pdfy3} can be evaluated with the Andreief identity, 
see e.g.\ \cite[Chapter 3]{Deift_Gioev},
\begin{multline*} 
	\int \cdots \int \det\left[ \phi_j(x_k) \right]_{j,k=1}^n \det\left[\psi_k(x_j) \right]_{j,k=1}^n \,  d\mu(x_1) \cdots d\mu(x_n) 	\\
	= n! \det \left[ \int \phi_j(x) \psi_k(x) d\mu(x) \right]_{j,k=1}^n 
	\end{multline*}
	and the result is that \eqref{pdfy3} is proportional to \eqref{Ypdf}
with functions
\begin{align*} g_{k}(y) = \int_{0}^{\infty} \frac{y^{\nu}}{x^{\nu + 1}} e^{-y/x} f_{k}(x) dx 
	 = \int_0^{\infty} x^{\nu} e^{-x} f_{k}\left(\frac{y}{x}\right) \frac{dx}{x}  
	\end{align*}
and this completes the proof of Theorem  \ref{thm: distr_sv_prod}.
\end{proof}

\begin{remark}
We emphasize that in Theorem \ref{thm: distr_sv_prod} we do not assume that the probability
distribution on $X$ is invariant under (left or right) multiplication with unitary
matrices.
\end{remark}

\begin{remark} It is of interest to find other random matrices $A$ so that
multiplication with $A$ preserves the biorthogonal structure of squared singular
values. In a forthcoming work we will show that this is the case for multiplication
with truncated unitary matrices.
The main issue is to find a suitable analogue of the Harish-Chandra/Itzykson
Zuber formula \eqref{HCIZ}. 
\end{remark}

Theorem \ref{thm: distr_sv_prod} has a number of immediate consequences that we list now.

\section{Corollaries}

We can apply Theorem \ref{thm: distr_sv_prod} to any random matrix $X$ for which the squared
singular values have a joint probability density function of the form \eqref{Xpdf}. 

In all the examples below, we will see the appearance of Meijer G-functions. This is actually
quite naturally, because of its connections with the Mellin transform. In particular if
$f_k$ in Theorem \ref{thm: distr_sv_prod} is a Meijer G-function, then also
$g_k$ in \eqref{eq: mellin_transf_f} is a Meijer G-function, see formula
\eqref{eq: MeijerG_Mellin_transform} in the appendix.

\subsection{$X$ is a Ginibre matrix}

Suppose $X = G_1$ is itself a complex Ginibre random matrix of size $(n + \nu_1) \times n$, $\nu_1 \geq 0$. 
Then it is known that the squared singular values of $X$ have a joint p.d.f.\ proportional to
\begin{equation*}
	\Delta(x)^2 \prod_{j=1}^n x_j^{\nu_1} e^{-x_j}
\end{equation*}
which can be written in the form \eqref{Xpdf} with 
\begin{equation} \label{eq: expfunction} 
	f_k(x) = x^{\nu_1 + k} e^{-x} =  \MeijerG{1}{0}{0}{1}{-}{\nu_1 + k}{x}. 
\end{equation}

Assume now that $Y = G_M \cdots G_1$ is the product of $M$ independent complex Ginibre matrices where
$G_k$ has size $(n + \nu_k) \times (n + \nu_{k-1})$ and all $\nu_k \geq 0$ with $\nu_0 = 0$.
Then we can apply Theorem \ref{thm: distr_sv_prod} $M-1$ times and using 
\eqref{eq: MeijerG_convolution1} and \eqref{eq: expfunction} we immediately find:

\begin{corollary} \label{cor: prod_M_gin}
The joint probability density function of the squared singular values of $Y = G_M \cdots G_1$ 
is proportional to
\begin{equation}
\Delta(y) \det\left[w_{k-1}(y_j)\right]_{j, k = 1}^n
\end{equation}
where 
\begin{equation}
	w_k(y) = \MeijerG{M}{0}{0}{M}{-}{\nu_M, \dots, \nu_1 + k}{y}.
\end{equation}
\end{corollary}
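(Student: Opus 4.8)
The plan is to apply Theorem~\ref{thm: distr_sv_prod} iteratively, starting from the base case in which $X=G_1$ is a single complex Ginibre matrix. First I would recall that the squared singular values of $G_1$ form a Laguerre (orthogonal polynomial) ensemble, which is indeed of the form \eqref{Xpdf} with $f_k(x)=x^{\nu_1+k}e^{-x}$; the identity $x^{a}e^{-x}=\MeijerG{1}{0}{0}{1}{-}{a}{x}$ rewrites these weights as Meijer $G$-functions, establishing the $M=1$ case of the corollary.

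Next I would proceed by induction on $M$. Suppose the squared singular values of $G_{M-1}\cdots G_1$ form a polynomial ensemble with functions $f_k(x)=\MeijerG{M-1}{0}{0}{M-1}{-}{\nu_{M-1},\dots,\nu_1+k}{x}$. Writing $Y=G_M(G_{M-1}\cdots G_1)$ and noting $G_M$ has size $(n+\nu_M)\times(n+\nu_{M-1})$, so that the role of $\nu$ in Theorem~\ref{thm: distr_sv_prod} is played by $\nu_M$ and the role of $l$ by $n+\nu_{M-1}\geq n$, the theorem tells me the squared singular values of $Y$ form a polynomial ensemble with functions
\[ g_k(y)=\int_0^\infty x^{\nu_M}e^{-x}\,f_k\!\left(\frac{y}{x}\right)\frac{dx}{x}. \]
The step that does the real work is to identify this Mellin convolution as the Meijer $G$-function with one more parameter $\nu_M$ prepended: this is exactly the content of the Meijer $G$-function convolution formula referenced as \eqref{eq: MeijerG_convolution1} in the appendix, namely that convolving $x\mapsto \MeijerG{1}{0}{0}{1}{-}{\nu_M}{x}$ with $\MeijerG{M-1}{0}{0}{M-1}{-}{\nu_{M-1},\dots,\nu_1+k}{x}$ in the Mellin sense produces $\MeijerG{M}{0}{0}{M}{-}{\nu_M,\nu_{M-1},\dots,\nu_1+k}{x}$. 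Applying this with the argument $y$ yields $g_k=w_k$ as claimed, closing the induction.

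The main obstacle, such as it is, is bookkeeping rather than anything deep: one must check that the hypotheses of Theorem~\ref{thm: distr_sv_prod} are met at every stage (in particular the dimension constraint $l\ge n$, which holds because all $\nu_j\ge 0$), and one must invoke the correct form of the Meijer $G$-function Mellin-convolution identity, being careful about the order in which the parameters $\nu_j$ accumulate and about the convergence of the defining integral \eqref{eq: mellin_transf_f} when $f_k$ is a Meijer $G$-function of the stated type. Since all of these are either routine or already recorded in the appendix, the proof is essentially a clean iteration of the transformation theorem together with the closure of Meijer $G$-functions under Mellin convolution.
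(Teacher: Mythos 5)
Your proposal is correct and follows essentially the same route as the paper: the base case $f_k(x)=x^{\nu_1+k}e^{-x}=\MeijerG{1}{0}{0}{1}{-}{\nu_1+k}{x}$ from the Laguerre form of the Ginibre singular value density, followed by $M-1$ applications of Theorem~\ref{thm: distr_sv_prod} combined with the Mellin convolution identity \eqref{eq: MeijerG_convolution1} to prepend one parameter $\nu_j$ at each step. The only difference is that you spell out the bookkeeping (the identification $\nu=\nu_M$, $l=n+\nu_{M-1}\ge n$) that the paper leaves implicit.
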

This is the result of Akemann, Ipsen and Kieburg \cite{Akemann_Ipsen_Kieburg} mentioned in the introduction.

\subsection{$X$ is the inverse of a product of Ginibre matrices}

A second application of Theorem \ref{thm: distr_sv_prod}  is inspired by the recent 
work of Forrester \cite{Forrester} who considered the product 
\begin{equation} \label{eq: prod_gin_inv}
	Y = G_M \cdots G_1(\tilde{G}_K \cdots \tilde{G}_1)^{-1}
\end{equation} 
of $M$ Ginibre random matrices with the inverse of a product of $K$ Ginibre random matrices.
Here it is assumed that $\tilde{G}_j$ has size $(n + \tilde{\nu}_j) \times (n + \tilde{\nu}_{j-1})$
with all $\tilde{\nu}_j \geq 0$, and $\tilde{\nu}_0 = \tilde{\nu}_K = 0$, so that $\tilde{G}_K \cdots \tilde{G}_1$
is a square matrix.

From Corollary \ref{cor: prod_M_gin} we know that the squared singular values of 
$\tilde{G}_K \cdots \tilde{G}_1$ have a joint probability density function proportional to 
\begin{equation} \label{eq: MeijerGK}
	\Delta(x) \det\left[\phi_{k-1}(x_j)\right]_{j, k = 1}^n, \qquad 
		\phi_{k}(x) = \MeijerG{K}{0}{0}{K}{-}{\tilde{\nu}_K, \dots, \tilde{\nu}_1 + k}{x}.
\end{equation}

The following simple lemma shows that the squared singular values of $(\tilde{G}_K \cdots \tilde{G}_1)^{-1}$
then also have the structure of a polynomial ensemble. 
\begin{lemma} \label{lemma: distr_sv_inv}
Let $X$ be a random matrix of size $n \times n$ such that the squared singular 
values $x_1, \ldots, x_n$ of $X$ have a joint probability density function proportional to
\begin{equation} \label{eq: lemma_distr_sv}
	\Delta(x) \det\left[\phi_k (x_j)\right]_{j, k = 1}^n,
\end{equation}
for certain functions $\phi_k$. 
Then the squared singular values of $Y = X^{-1}$ have a joint probability density function proportional to
\begin{equation} \label{Xinvpdf}
	\Delta(y) \det\left[\psi_k \left(y_j\right)\right]_{j, k = 1}^n
\end{equation}
with
\begin{equation} \label{psik}
	\psi_k(y) = y^{-n-1}\phi_k\left(y^{-1}\right), \qquad k = 1, \ldots, n.
\end{equation}
\end{lemma}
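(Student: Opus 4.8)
The plan is to perform a change of variables at the level of the squared singular values. If $X$ is an invertible $n\times n$ matrix with squared singular values $x_1,\dots,x_n$, then $Y = X^{-1}$ has squared singular values $y_j = 1/x_j$ (after relabelling): indeed, writing $X = U\Sigma V^*$ in singular value decomposition gives $X^{-1} = V\Sigma^{-1}U^*$, so the singular values are inverted. Thus the map $(x_1,\dots,x_n)\mapsto(y_1,\dots,y_n) = (1/x_1,\dots,1/x_n)$ transports the joint density of the $x_j$'s to that of the $y_j$'s, and all I must do is track how the density \eqref{eq: lemma_distr_sv} transforms under this substitution, together with the Jacobian factor $\prod_j \frac{dx_j}{dy_j} = \prod_j y_j^{-2}$.

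The key algebraic steps are then: first, substitute $x_j = 1/y_j$ into the Vandermonde factor and use the identity $\Delta(x) = \Delta(1/y) = (-1)^{n(n-1)/2}\bigl(\prod_{j=1}^n y_j^{-(n-1)}\bigr)\Delta(y)$, which is the same identity already invoked in the proof of Lemma \ref{lemma: distr_sv_prod_fixed}. Second, substitute into the determinant: $\det[\phi_k(x_j)] = \det[\phi_k(1/y_j)]$. Third, collect the factor $\prod_j y_j^{-(n-1)}$ from the Vandermonde and the factor $\prod_j y_j^{-2}$ from the Jacobian, giving a total factor $\prod_j y_j^{-(n+1)}$, and distribute it across the rows of the determinant, i.e. $\bigl(\prod_j y_j^{-(n+1)}\bigr)\det[\phi_k(1/y_j)] = \det\bigl[y_j^{-(n+1)}\phi_k(1/y_j)\bigr] = \det[\psi_k(y_j)]$. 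The sign $(-1)^{n(n-1)/2}$ and the overall constant are absorbed into the proportionality constant, and one arrives at \eqref{Xinvpdf} with $\psi_k$ as in \eqref{psik}.

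I should be mildly careful that the change of variables is legitimate: the density \eqref{eq: lemma_distr_sv} is supported where all $x_j > 0$ (the squared singular values of an invertible matrix are positive almost surely), so the substitution $x_j = 1/y_j$ is a smooth bijection of $(0,\infty)^n$ onto itself, and there are no boundary issues. One also wants the map on matrices $X \mapsto X^{-1}$ to push forward the distribution of squared singular values correctly; this is immediate since the squared singular values of $Y$ are a deterministic function of those of $X$, namely $y_j = 1/x_j$.

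The main obstacle here is essentially bookkeeping rather than conceptual: getting the exponent in the combined power of $y_j$ exactly right (it is $-(n-1)$ from $\Delta$ plus $-2$ from the Jacobian, totalling $-(n+1)$, matching the $y^{-n-1}$ in \eqref{psik}) and making sure the Jacobian is computed with respect to the correct variables (squared singular values, not singular values). Since the statement only asserts proportionality, the awkward constants $(-1)^{n(n-1)/2}$ and the normalization drop out, which makes the argument quite short.
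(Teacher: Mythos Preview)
Your proposal is correct and follows essentially the same approach as the paper's proof: change variables $x_j = 1/y_j$, compute the Jacobian $\prod_j y_j^{-2}$, apply the Vandermonde identity $\Delta(y^{-1}) = (-1)^{n(n-1)/2}\prod_j y_j^{-(n-1)}\Delta(y)$, and absorb the resulting factor $\prod_j y_j^{-(n+1)}$ into the rows of the determinant. Your write-up is slightly more explicit (the SVD justification for $y_j = 1/x_j$ and the distribution of the power into the rows), but the argument is the same.
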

\begin{proof}
The squared singular values of $X^{-1}$ are given by $y_j = x_j^{-1}$, $j =1, \ldots, n$.
making the change of variables $x_j \mapsto y_j = x_j^{-1}$ in \eqref{eq: lemma_distr_sv} gives 
us the joint probability density function of the squared singular values of $X^{-1}$. The Jacobian
of this change of variables is $\ds (-1)^n \prod_{j = 1}^n y_j^{-2}$.
Noting also that 
\[ \Delta(y^{-1}) = (-1)^{n(n-1)/2} \prod_{j = 1}^n y_j^{-n + 1} \Delta(y), \] 
we find that
the joint probability density function of $y_1, \ldots, y_n$ is therefore proportional to
\begin{equation*}
	\prod_{j=1}^n y_j^{-2} \, \prod_{j=1}^n y_j^{-n+1} \, \Delta(y) \, \det\left[\phi_{k-1}(y_j^{-1}) \right]
\end{equation*}
which is indeed \eqref{Xinvpdf} with $\psi_k$ given by \eqref{psik}.
\end{proof}

The class of Meijer G-functions is closed under inversion of the argument and under
multiplication by a power of the independent variable, see \eqref{eq: MeijerG_timesxrho} 
and \eqref{eq: MeijerG_xinverse}. It follows that if $\phi_k$ in 
Lemma~\ref{lemma: distr_sv_inv} is a Meijer G-function, then so is $\psi_k$.
To be precise, if
\[ \phi_k(x) = \MeijerG{m}{n}{p}{q}{a_1, \ldots, a_p}{b_1, \ldots, b_q}{x} \]
then
\[ \psi_k(y) = \MeijerG{n}{m}{q}{p}{-b_1-n, \ldots, -b_q-n}{-a_1-n, \ldots, -a_p-n}{x}. \] 

If we apply this to \eqref{eq: MeijerGK} we see that the squared singular values of
$X = (\tilde{G}_K \cdots \tilde{G}_1)^{-1}$ have a joint p.d.f. of the form \eqref{Xpdf}
with
\[ f_{k}(x) = \MeijerG{0}{K}{K}{0}{-\tilde{\nu}_K-n, \ldots,  -\tilde{\nu}_2 -n,  -\tilde{\nu}_1 - n -k }{-}{x}. \] 

Then a repeated application of Theorem \ref{thm: distr_sv_prod}
and formula \eqref{eq: MeijerG_convolution1} gives the following result of  \cite[Proposition 3]{Forrester}.
\begin{corollary} \label{cor: prod_M_gin_inv_K_gin}
Let $\tilde{G}_1, \cdots \tilde{G}_K$ and $G_1, \ldots, G_M$ be independent complex Ginibre matrices
where $\tilde{G}_j$ has size $(n + \tilde{\nu}_j) \times (n + \tilde{\nu}_{j-1})$ and
$G_j$ has size $(n + \nu_j) \times (n + \nu_{j-1})$ with $\tilde{\nu}_1, \ldots, \tilde{\nu}_{K-1} \geq 0$,
$\nu_1, \ldots, \nu_M \geq 0$ and $\nu_0 = \tilde{\nu}_0 = \tilde{\nu}_K = 0$.
Then the squared singular values  $y_1, \ldots, y_n$ of $Y$ given in \eqref{eq: prod_gin_inv} with $K \geq 1$,
have a joint probability density function  proportional to
\begin{equation} \label{eq: jpdf_gin_invgin}
	\Delta(y) \det\left[w_{k-1}(y_j)\right]_{j, k = 1}^n
\end{equation}
where 
\begin{equation} \label{eq: wk_product_with_inverses}
	w_k(y) = \MeijerG{M}{K}{K}{M}{-\tilde{\nu}_K - n, \ldots, -\tilde{\nu}_2 - n, -\tilde{\nu}_1 - n - k}{\nu_M, \ldots, \nu_1}{y}.
\end{equation}
\end{corollary}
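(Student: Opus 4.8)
The plan is to obtain Corollary \ref{cor: prod_M_gin_inv_K_gin} by combining three ingredients that are already available in the excerpt: the determinantal structure of the squared singular values of $\tilde G_K \cdots \tilde G_1$ from Corollary \ref{cor: prod_M_gin}, the inversion lemma (Lemma \ref{lemma: distr_sv_inv}), and then a repeated application of the transformation theorem (Theorem \ref{thm: distr_sv_prod}) to account for the left multiplication by $G_1, \ldots, G_M$. The Meijer G-function bookkeeping is handled by the appendix formulas \eqref{eq: MeijerG_timesxrho}, \eqref{eq: MeijerG_xinverse} and \eqref{eq: MeijerG_convolution1}.

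First I would start from \eqref{eq: MeijerGK}: by Corollary \ref{cor: prod_M_gin} the squared singular values of the square matrix $\tilde G_K \cdots \tilde G_1$ form a polynomial ensemble with $\phi_k(x) = \MeijerG{K}{0}{0}{K}{-}{\tilde\nu_K, \ldots, \tilde\nu_1+k}{x}$. Then I would feed this into Lemma \ref{lemma: distr_sv_inv}, which tells us that the squared singular values of $X = (\tilde G_K \cdots \tilde G_1)^{-1}$ again form a polynomial ensemble with $f_k(x) = x^{-n-1} \phi_k(x^{-1})$. Here I would invoke the stated closure properties of Meijer G-functions under $x \mapsto 1/x$ and under multiplication by a power (formulas \eqref{eq: MeijerG_xinverse} and \eqref{eq: MeijerG_timesxrho}), which turn $\phi_k$ into $f_k(x) = \MeijerG{0}{K}{K}{0}{-\tilde\nu_K - n, \ldots, -\tilde\nu_1 - n - k}{-}{x}$ as already recorded in the excerpt just above the statement. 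This establishes the base case $M = 0$ (with the convention that an empty product is the identity and $Y = X$) and, more to the point, gives a polynomial ensemble of the form \eqref{Xpdf} to which Theorem \ref{thm: distr_sv_prod} applies.

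Next I would run an induction on $M$. Suppose the squared singular values of $G_{M-1} \cdots G_1 X$ form a polynomial ensemble whose functions are the Meijer G-functions $w_{k}^{(M-1)}(y) = \MeijerG{M-1}{K}{K}{M-1}{-\tilde\nu_K - n, \ldots, -\tilde\nu_1 - n - k}{\nu_{M-1}, \ldots, \nu_1}{y}$. Multiplying on the left by the $(n+\nu_M) \times (n+\nu_{M-1})$ Ginibre matrix $G_M$ and applying Theorem \ref{thm: distr_sv_prod} replaces each $w_k^{(M-1)}$ by its Mellin convolution with $x \mapsto x^{\nu_M} e^{-x} = \MeijerG{1}{0}{0}{1}{-}{\nu_M}{x}$. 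By the Meijer G convolution identity \eqref{eq: MeijerG_convolution1}, convolving with $\MeijerG{1}{0}{0}{1}{-}{\nu_M}{x}$ simply adjoins one extra lower parameter $\nu_M$ and bumps $m$ and $q$ by one, yielding exactly $w_k^{(M)}$ with the parameter string \eqref{eq: wk_product_with_inverses}. Since $l = n$ throughout (all the $G_j$ and $X$ are square or tall with the right dimensions, so the hypothesis $l \ge n$ of Theorem \ref{thm: distr_sv_prod} holds at each step), the induction goes through and after $M$ steps produces precisely \eqref{eq: jpdf_gin_invgin}–\eqref{eq: wk_product_with_inverses}.

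I do not anticipate a serious obstacle: the argument is essentially an assembly of results already proved in the excerpt, and the only real work is the Meijer G-function symbol manipulation. The step most prone to error is keeping the ordering and signs of the parameter lists consistent through the inversion step and the successive convolutions — in particular making sure the shift by $n$ from Lemma \ref{lemma: distr_sv_inv} is applied to all the $\tilde\nu_j$ (not just the one carrying the index $k$) and that the new lower parameters $\nu_1, \ldots, \nu_M$ accumulate in the right order as dictated by \eqref{eq: MeijerG_convolution1}. One should also double-check that the $K = 1$ boundary case is not special: with $K = 1$ one has $\tilde\nu_0 = \tilde\nu_1 = 0$ forced, $\tilde G_1$ is $n \times n$, and the formula for $w_k$ still reads off correctly. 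Finally, one verifies that the normalization constants absorbed at each application of Theorem \ref{thm: distr_sv_prod} and Lemma \ref{lemma: distr_sv_inv} depend only on $n$ (and the fixed parameters), so the end result is a genuine probability density proportional to \eqref{eq: jpdf_gin_invgin}.
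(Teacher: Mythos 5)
Your proposal matches the paper's own argument: it obtains the polynomial ensemble for $(\tilde G_K\cdots\tilde G_1)^{-1}$ from Corollary \ref{cor: prod_M_gin} and Lemma \ref{lemma: distr_sv_inv}, identifies the resulting $f_k$ as $G^{\,0,K}_{K,0}$ Meijer G-functions via \eqref{eq: MeijerG_timesxrho} and \eqref{eq: MeijerG_xinverse}, and then applies Theorem \ref{thm: distr_sv_prod} together with \eqref{eq: MeijerG_convolution1} a total of $M$ times, exactly as the paper does. The only slip is the parenthetical claim that $l=n$ throughout --- at the $j$-th application one has $l=n+\nu_{j-1}$ --- but since $l\geq n$ always holds, this does not affect the argument.
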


\subsection{$X$ is a truncation of a random unitary matrix}

As a third application we consider a new example, where we start from a matrix $X$ which is
a truncated unitary matrix. 
Let $U$ be an $l \times l$ Haar distributed unitary matrix and let $X$ be 
the $(n + \nu_1) \times n$ upper left block of $U$, where $ \nu_1 \geq 0$ and $l \geq 2n + \nu_1$. 
Then the squared singular values of $X$ are in $(0,1)$ with a joint p.d.f that is
proportional to (see e.g.\  \cite[Proposition 2.1]{Jiang})
\begin{equation} \label{Jacobipdf}
	\prod_{1 \leq j < k \leq n} (x_k-x_j)^2 \, \prod_{j=1}^n x_j^{\nu_1}  (1-x_j)^{l - 2n - \nu_1}, 
		\qquad \text{all } x_j \in (0,1). 
\end{equation}
This is a Jacobi ensemble with parameters $\nu_1$ and $l-2n - \nu_1$. Note that in the case $l < 2n + \nu_1$
the truncation $X$ always has $1$ as a singular value, and \eqref{Jacobipdf} is not valid.
We can rewrite \eqref{Jacobipdf} as \eqref{Xpdf}
with functions
\begin{equation} \label{Jacobifunctions} 
	f_k(x) = \begin{cases} x^{\nu_1+k} (1-x)^{l-2n-\nu_1}  &  \text{ for } 0 < x < 1, \\
		0 & \text{ elsewhere.} \end{cases}
	\end{equation}
Then 
\begin{equation} \label{Jacobi_MeijerG}
	f_k(x) = \Gamma(l - 2n - \nu_1 + 1) \MeijerG{1}{0}{1}{1}{l-2n+k+1}{\nu_1+k}{x}.
	\end{equation}

Let $M \geq 1$ and form the product $Y= G_{M-1} \cdots G_1 X$ where 
$G_j$ is a complex Ginibre matrix of size $(n+\nu_j) \times (n +\nu_{j-1})$ for $j = 1, \ldots, M-1$.
Then Theorem \ref{thm: distr_sv_prod} together with  \eqref{eq: MeijerG_convolution1} 
and \eqref{Jacobi_MeijerG} gives the following. 
\begin{corollary} \label{cor: prod_gin_trunc}
Let $X$ be the $(n + \nu_1) \times n$ truncation of a unitary matrix of size $l \times l$ with $l \geq 2n + \nu_1$.
Let $M \geq 1$ and let $Y = G_{M-1} \cdots G_1 X$ where each  $G_j$ is a complex Ginibre matrix of size $(n+\nu_j) \times (n+\nu_{j-1})$
with $\nu_1, \ldots, \nu_M \geq 0$.
Then the squared singular values $y_1, \ldots, y_n$ of $Y$ have a joint probability density function proportional to
\begin{equation} \label{eq: jpdf_gin_unit}
	\Delta(y) \det\left[w_{k-1}(y_j)\right]_{j, k = 1}^n
\end{equation}
where
\begin{equation} \label{eq: wk_truncation}
	w_k(y) = \MeijerG{M}{0}{1}{M}{l - 2n + 1 + k}{\nu_M, \ldots, \nu_2, \nu_1 + k}{y}, \qquad y > 0.
\end{equation}
\end{corollary}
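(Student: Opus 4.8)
The plan is to obtain \eqref{eq: jpdf_gin_unit}--\eqref{eq: wk_truncation} by feeding the Jacobi ensemble of the truncation $X$ into Theorem~\ref{thm: distr_sv_prod} once for each of the $M-1$ Ginibre factors, and tracking how the Meijer G-functions transform at each step. First I would note that the density \eqref{Jacobipdf} is a polynomial ensemble of the form \eqref{Xpdf} with the functions $f_k$ of \eqref{Jacobifunctions}, and that \eqref{Jacobi_MeijerG} is simply the elementary evaluation $\MeijerG{1}{0}{1}{1}{a}{b}{x} = \Gamma(a-b)^{-1}\,x^{b}(1-x)^{a-b-1}$ for $0<x<1$ (and $0$ for $x>1$), applied with $b = \nu_1+k$ and $a = l-2n+k+1$; this is legitimate since the hypothesis $l \ge 2n+\nu_1$ forces $a-b = l-2n-\nu_1+1 \ge 1 > 0$.

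Next I would apply Theorem~\ref{thm: distr_sv_prod} successively to $G_1,\dots,G_{M-1}$. At each stage the matrix produced so far has $n$ columns and at least $n$ rows, is independent of the next Ginibre factor, and has squared singular values forming a polynomial ensemble whose functions are explicit Meijer G-functions, so the hypotheses of the theorem hold. The transformation \eqref{eq: mellin_transf_f} is exactly the Mellin convolution with $x^{\nu}e^{-x} = \MeijerG{1}{0}{0}{1}{-}{\nu}{x}$ for the value of $\nu$ determined by the size of that factor, and by the Meijer G convolution formula \eqref{eq: MeijerG_convolution1} this raises the orders $m \mapsto m+1$, $q \mapsto q+1$ and adjoins $\nu$ to the list of lower parameters. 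Thus, starting from $\MeijerG{1}{0}{1}{1}{l-2n+1+k}{\nu_1+k}{\cdot}$ and adjoining, one parameter per factor, the values $\nu_2,\dots,\nu_M$, one reaches exactly the $w_k$ of \eqref{eq: wk_truncation}, up to a multiplicative constant. Since \eqref{eq: jpdf_gin_unit} is claimed only up to proportionality, the constant $\Gamma(l-2n-\nu_1+1)$ from \eqref{Jacobi_MeijerG} and the factorials generated by the Andreief identity in the proof of Theorem~\ref{thm: distr_sv_prod} may be ignored.

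The only computational inputs are the two Meijer G identities just mentioned, and both are standard (the convolution formula is recorded in the appendix). What needs care is the bookkeeping with the matrix sizes: one must check that at each of the $M-1$ steps the current matrix still has at least $n$ rows so that Theorem~\ref{thm: distr_sv_prod} applies, and that the parameters adjoined by the successive convolutions are precisely $\nu_2,\dots,\nu_M$ rather than some shift of them. One should also confirm that every Mellin convolution integral encountered converges, which it does because all the lower parameters that arise --- namely $\nu_1+k$, $l-2n-\nu_1$, and the $\nu_j$ --- are nonnegative.
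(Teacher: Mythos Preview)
Your proposal is correct and follows essentially the same route as the paper: the paper's proof is the single sentence ``Theorem~\ref{thm: distr_sv_prod} together with \eqref{eq: MeijerG_convolution1} and \eqref{Jacobi_MeijerG} gives the following,'' and your argument is simply a careful unpacking of that sentence, with the added (and welcome) verification of \eqref{Jacobi_MeijerG}, the size bookkeeping, and the convergence of the Mellin convolutions.
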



\section{Integral Representations and Hard Edge Scaling Limit}

\subsection{Kernels $K_{\nu_1, \ldots, \nu_M}$}

For any set of functions $w_0, \ldots, w_{n-1}$, the 
probability density function \eqref{eq: jpdf_gin}  is a polynomial ensemble
and we already noted in the introduction that the correlation kernel takes the form 
\begin{equation} \label{eq: corr_kernel_general}
	K_n(x, y) = \sum_{k = 0}^{n-1} P_k(x) Q_k(y)
\end{equation}
where, for each $k = 0, \ldots, n-1$, $P_k$ is a polynomial of degree $k$ and $Q_k$ is in the span of $w_0, \ldots, w_{k}$ such that
\begin{equation} \label{eq: biorth}
	\int_0^{\infty} P_j(x) Q_k(x) dx = \delta_{j, k}.
\end{equation}
For the case of weight functions \eqref{wk1} that are associated with the product
of $M$ Ginibre matrices, it was shown in \cite{Akemann_Ipsen_Kieburg} and \cite{Kuijlaars_Zhang}
that the functions $P_j$ and $Q_k$ have contour integral representations,
which was used in \cite{Kuijlaars_Zhang} to derive a double integral representation
of the correlation kernel \eqref{eq: corr_kernel_general}.
Based on this double integral representation the following scaling limit was obtained in
\cite[Theorem 5.3]{Kuijlaars_Zhang}.

\begin{theorem}  \label{thm: scaling_KZ}
Let $M \geq 1$ and $\nu_1, \ldots, \nu_M \geq 0$ be fixed integers. Then the kernels $K_n$ have
the scaling limit
\[ \lim_{n \to \infty} \frac{1}{n} K_n \left( \frac{x}{n}, \frac{y}{n} \right) =
	K_{\nu_1, \ldots, \nu_M}(x,y). \] The limiting kernel has a double
	integral represention 
\begin{equation} \label{eq: lim_kernel_hard_edge}
	K_{\nu_1, \ldots, \nu_M}(x, y) = \frac{1}{(2 \pi i)^2} \int_{-1/2 - i \infty}^{-1/2 + i \infty} ds 
	\oint_{\Sigma} dt \prod_{i = 0}^M 
	\frac{\Gamma(s + \nu_i + 1)}{\Gamma(t + \nu_i + 1)} \frac{\sin(\pi s)}{\sin(\pi t)} \frac{x^t y^{-s-1}}{s-t}
\end{equation}
where $\Sigma$ is a contour in $\{ t \in \mathbb{C} \mid \Re t > - 1/2 \}$ encircling the positive real axis, 
as illustrated in Figure \ref{fig: plot_contours_int_k_nu_m}. 
\end{theorem}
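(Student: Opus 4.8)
The plan is to follow the two-step strategy that was sketched in the text before the statement of Theorem~\ref{thm: scaling_KZ}: first derive contour integral representations for the biorthogonal functions $P_k$ and $Q_k$ associated with the weights \eqref{wk1}, and then combine them via a Christoffel--Darboux-type manipulation into a double integral for $K_n$, which can be analyzed asymptotically. For the first step, I would exploit the fact that each $w_k$ in \eqref{wk1} is a Meijer $G$-function, so its Mellin transform is an explicit ratio of Gamma functions, $\int_0^\infty w_k(y) y^{s} \, dy = \prod_{i=1}^M \Gamma(s + \nu_i + 1) \cdot (\text{shift in the }\nu_1\text{-slot})$. The orthogonality conditions \eqref{eq: pol_ensemble_Pk} say that $P_k$ annihilates $w_0, \ldots, w_{k-1}$ against Lebesgue measure; writing $P_k(x) = \sum_j c_{k,j} x^j$ and testing against $w_0,\ldots,w_{k-1}$ turns this into a linear system whose coefficients are the Mellin moments $\int_0^\infty x^j w_i(x)\,dx = \prod_{l=1}^M \Gamma(j + \nu_l + 1)/\Gamma(\nu_1+1)\cdot(\ldots)$, i.e.\ products of Pochhammer-type quantities. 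One recognizes the resulting determinantal/Cramer solution as a residue sum, which can be repackaged as a single contour integral
\[
	P_k(x) = \frac{(-1)^k k!}{2\pi i} \oint_{\Sigma} \prod_{i=0}^M \frac{\Gamma(t - k + \nu_i + 1)}{\Gamma(t + \nu_i + 1)} \, \frac{x^t}{?} \, dt,
\]
with $\Sigma$ encircling $0, 1, \ldots, k-1$; similarly $Q_k$ comes out as a vertical-line Mellin--Barnes integral with the reciprocal Gamma product and a $\sin(\pi s)$ factor accounting for the reflection formula. (This is exactly the structure already established in \cite{Akemann_Ipsen_Kieburg, Kuijlaars_Zhang}, so I would cite those computations rather than redo the bookkeeping.)

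The second step is to insert these two integral representations into $K_n(x,y) = \sum_{k=0}^{n-1} P_k(x) Q_k(y)$ and evaluate the sum over $k$ in closed form. The telescoping mechanism is the standard one: the summand, after the substitution, contains a factor of the shape $\prod_{i}\Gamma(t - k + \cdots)/\Gamma(s - k + \cdots)$ times something geometric in $k$, and the finite geometric-type sum $\sum_{k=0}^{n-1}$ collapses to a boundary term of the form $\bigl(A_n(s,t) - A_0(s,t)\bigr)/(s - t)$, producing the characteristic $1/(s-t)$ kernel denominator. Deforming contours so that $s$ runs up a vertical line $\Re s = c$ with $-1 < c < 0$ and $t$ runs around $\Sigma \subset \{\Re t > c\}$ (so that $s - t$ never vanishes) gives a clean double integral representation for $K_n$ at finite $n$, with all $n$-dependence concentrated in one explicit ratio-of-Gamma factor raised to a power or with an argument shifted by $n$.

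For the final, analytic part I would perform the hard-edge rescaling $x \mapsto x/n$, $y \mapsto y/n$ and $\frac1n K_n$, and send $n \to \infty$. Using $x^{t}\,n^{-t}$ and the asymptotics $\Gamma(z+n)/\Gamma(w+n) \sim n^{z-w}$ (Stirling), the $n$-dependent Gamma ratio combines with the $n^{-t}$, $n^{s}$ powers so that all powers of $n$ cancel and the integrand converges pointwise to the integrand of \eqref{eq: lim_kernel_hard_edge}, with the $\sin(\pi s)/\sin(\pi t)$ surviving from the reflection formulas and the contour for $s$ settling on the line $\Re s = -1/2$. The main obstacle, and the part deserving genuine care, is justifying the interchange of limit and integration: one needs uniform (in $n$) integrable bounds on the integrand along the unbounded vertical $s$-contour. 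This requires precise control of $|\Gamma(s+\nu_i+1)|$ and $|\Gamma(s+n+\cdots)|$ for $s = -1/2 + i\tau$ as $|\tau| \to \infty$ — typically an estimate of the type $|\Gamma(\sigma + i\tau)| \sim \sqrt{2\pi}\,|\tau|^{\sigma - 1/2} e^{-\pi|\tau|/2}$ — so that the product of $M+1$ such factors in the denominator beats the polynomial growth from $y^{-s-1}$ and the $1/(s-t)$ decay, giving a dominating function independent of $n$. Once a dominated-convergence argument is in place on a fixed contour, and one checks that the $n$-dependent contour deformations can be done without crossing poles, the theorem follows; since this is a restatement of \cite[Theorem 5.3]{Kuijlaars_Zhang}, I would present it as quoted and refer there for the full estimates.
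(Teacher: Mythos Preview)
Your proposal is appropriate: the paper does not give its own proof of Theorem~\ref{thm: scaling_KZ} at all, but simply quotes it as \cite[Theorem~5.3]{Kuijlaars_Zhang}, and you correctly conclude that the result should be cited rather than re-proved. The strategy you sketch (contour-integral representations for $P_k$ and $Q_k$ from the Mellin structure of the Meijer $G$-weights, telescoping the sum to a double integral with a $1/(s-t)$ kernel, then Stirling plus dominated convergence) is exactly the method of \cite{Kuijlaars_Zhang}, and it is also the template the present paper follows in Sections~\ref{subsec: int_rep}--\ref{subsec: hard edge} for the new truncated-unitary case.
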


The kernels \eqref{eq: lim_kernel_hard_edge} have the alternative representation in terms
of Meijer G-functions
\begin{multline} \label{eq: Knu_integral}
	K_{\nu_1, \ldots, \nu_M}(x, y) \\
	= \int_0^1 \MeijerG{1}{0}{0}{M+1}{-}{-\nu_0, -\nu_1, \ldots, -\nu_M}{ux} 
	\MeijerG{M}{0}{0}{M+1}{-}{\nu_1, \ldots, \nu_M,\nu_0}{uy} du,
\end{multline}
with $\nu_0 = 0$, see also \cite{Kuijlaars_Zhang}.

\begin{figure}[t!]
\centering
\begin{tikzpicture}[scale=2]
\tikzset{deco/.style 2 args={
            decoration={             
                        markings,   
                        mark=at position {#1} with { 
                                    \arrow{latex},
                                    \node[anchor=\pgfdecoratedangle-90] {#2};
                        }
            },
            postaction={decorate}
    }
}

\draw[help lines] (-3,0) -- (3,0);
\draw[deco={0.3}{}, deco={0.7}{}] (-0.5, -1) -- (-0.5, 1) coordinate (yaxis);

\draw[help lines] (0,-1) -- (0,1);

\path[draw,line width=0.8pt] (0, 0.3) arc (90:270:0.3);
\draw[deco={0.55}{},line width=0.8pt] (2.5, 0.3) -- (0, 0.3); 
\draw[deco={0.5}{},line width=0.8pt] (0, -0.3) -- (2.5, -0.3); 

\node[below] at (-2, 0) {-2};
\node[below] at (-1, 0) {-1};
\node[below] at (0, 0) {0};
\node[below] at (1, 0) {1};
\node[below] at (2, 0) {2};
\node[below] at (1.5, -0.4) {$\Sigma$};
\node[left] at (yaxis) {$-\frac{1}{2} + i \mathbb{R}$};
\end{tikzpicture}
\caption{The contours $-\frac{1}{2} + i \mathbb R$ and $\Sigma$ in  
the double integral \eqref{eq: lim_kernel_hard_edge}}
\label{fig: plot_contours_int_k_nu_m}
\end{figure}

In this section, we consider the polynomial ensemble  \eqref{eq: jpdf_gin_unit} from Corollary \ref{cor: prod_gin_trunc}
that is associated with the product of complex Ginibre random matrices with one truncated unitary matrix.
Following \cite{Kuijlaars_Zhang} and \cite{Forrester} we are able to obtain 
integral representations for $P_k$ and $Q_k$ in this case as well, and from this a double integral representation 
for the correlation kernel.  While keeping $\nu_1, \ldots, \nu_M$ fixed and letting $l$ grow at least 
as $2n$, we obtain the limiting kernel $K_{\nu_1, \ldots, \nu_M}$ at the hard edge also in this case.


\subsection{Integral representations for $Q_k$ and $P_k$} \label{subsec: int_rep}

So in the rest of this section, we assume that we work with the functions $w_k$ in \eqref{eq: wk_truncation}.
The corresponding polynomials $P_k$  are such that $P_k$ is monic of degree $k$ with 
\begin{equation} \label{eq: Pk_character}
	\int_0^{\infty} P_k(x) w_j(x) dx = 0 \qquad \text{for } j=0, \ldots, k-1 
	\end{equation}
and the functions $Q_k$ satisfy 
\begin{equation} \label{eq: Qk_character} 
	\int_0^{\infty} x^j Q_k(x) dx = \delta_{j,k} \qquad \text{for } j=0, \ldots, k, 
	\end{equation}
with $Q_k$ in the linear span of $w_0, \ldots, w_k$.
The polynomials $P_k$ and functions $Q_k$ have the following integral representation.

\begin{proposition}
We have
\begin{equation} \label{eq: int_rep_qkx}
	Q_k(x) = \frac{1}{2\pi i} \int_{c - i\infty}^{c + i\infty} q_k(s) 
		\frac{\prod_{j = 1}^M \Gamma(s + \nu_j)}{\Gamma(s + l - 2n + 1)} x^{-s} ds
\end{equation}
where $c > 0$ and
\begin{equation} \label{eq: qk}
q_k(s) = \frac{\Gamma(l - 2n + 2k + 2)}{\prod_{j = 0}^M \Gamma(k + 1 + \nu_j)} \frac{(s-k)_k}{(s + l - 2n + 1)_k}.
\end{equation}
\end{proposition}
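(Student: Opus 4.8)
## Proof Proposal

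The plan is to verify directly that the function $Q_k$ defined by the contour integral \eqref{eq: int_rep_qkx}--\eqref{eq: qk} lies in the linear span of $w_0,\dots,w_k$ and satisfies the moment conditions \eqref{eq: Qk_character}. I would organize the argument around the Mellin transform, since all the objects in play — the Meijer G-functions $w_k$ in \eqref{eq: wk_truncation} and the integrand in \eqref{eq: int_rep_qkx} — have clean Mellin representations. Specifically, recalling that
\[
	w_k(x) = \MeijerG{M}{0}{1}{M}{l-2n+1+k}{\nu_M,\dots,\nu_2,\nu_1+k}{x},
\]
its Mellin transform is $\int_0^\infty x^{s-1} w_k(x)\,dx = \dfrac{\prod_{j=1}^M \Gamma(s+\nu_j) \cdot (\text{shift for the } \nu_1+k \text{ entry})}{\Gamma(s+l-2n+1+k)}$; more precisely the factor $\Gamma(s+\nu_1)$ is replaced by $\Gamma(s+\nu_1+k)$ and the denominator $\Gamma(s+l-2n+1)$ by $\Gamma(s+l-2n+1+k)$. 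So the first step is to write down, carefully, the Mellin transform $\mathcal{M}[w_j](s)$ of each $w_j$, and to observe that the ratio $\mathcal{M}[w_j](s)/\left(\prod_{i=1}^M\Gamma(s+\nu_i)/\Gamma(s+l-2n+1)\right)$ is a rational function of $s$ — in fact a ratio of shifted factorials. This identifies the common ``Mellin multiplier'' $\prod_{j=1}^M\Gamma(s+\nu_j)/\Gamma(s+l-2n+1)$ appearing in \eqref{eq: int_rep_qkx}, and shows that $Q_k$ is in $\mathrm{span}(w_0,\dots,w_k)$ precisely when $q_k(s)$, divided by the appropriate constants, is a polynomial in $s$ of degree $\le k$ that is a suitable linear combination of the rational prefactors of the $w_j$. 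Since $q_k(s) = C_k\,(s-k)_k/(s+l-2n+1)_k$ with $(s-k)_k = (s-k)(s-k+1)\cdots(s-1)$ a degree-$k$ polynomial and $(s+l-2n+1)_k$ matching the denominator shifts, this membership should fall out after a partial-fraction-type bookkeeping.

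Next I would check the moment conditions. For $j=0,\dots,k$, substitute \eqref{eq: int_rep_qkx} into $\int_0^\infty x^j Q_k(x)\,dx$ and interchange the order of integration, reducing the inner $x$-integral to a delta-like evaluation: $\int_0^\infty x^{j-s}\,dx$ is formally $\delta$-supported at $s = j+1$, which in the Mellin-inversion framework means that $\int_0^\infty x^j Q_k(x)\,dx$ equals the residue-free evaluation of the integrand at $s = j+1$, i.e.
\[
	\int_0^\infty x^j Q_k(x)\,dx = q_k(j+1)\,\frac{\prod_{i=1}^M\Gamma(j+1+\nu_i)}{\Gamma(j+l-2n+2)}.
\]
(The legitimacy of this step needs the contour $\Re s = c$ with $0 < c < 1$, or a shift/deformation argument; I will come back to this.) Then the content of \eqref{eq: Qk_character} is the purely combinatorial identity that $q_k(j+1)\,\prod_{i=1}^M\Gamma(j+1+\nu_i)/\Gamma(j+l-2n+2) = \delta_{j,k}$ for $0\le j\le k$. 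With $q_k$ as in \eqref{eq: qk}, we have $(s-k)_k\big|_{s=j+1} = (j+1-k)(j+2-k)\cdots j$, which vanishes for $j=0,1,\dots,k-1$ (one of the factors is $0$) and equals $k!$ for $j=k$. Plugging $j=k$ into the remaining factors and using $\Gamma(l-2n+2k+2)/(s+l-2n+1)_k\big|_{s=k+1} = \Gamma(l-2n+2k+2)/\Gamma(l-2n+2k+2)\cdot\Gamma(l-2n+k+2)$ — wait, more carefully $(s+l-2n+1)_k\big|_{s=k+1} = (l-2n+k+2)_k = \Gamma(l-2n+2k+2)/\Gamma(l-2n+k+2)$ — the $\Gamma(l-2n+2k+2)$ cancels and the denominator $\prod_{j=0}^M\Gamma(k+1+\nu_j)$ is exactly what is needed to cancel $\prod_{i=1}^M\Gamma(k+1+\nu_i)$ together with the $\nu_0=0$ term $\Gamma(k+1)=k!$ that matches the $k!$ from $(s-k)_k$. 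So the normalization constants in \eqref{eq: qk} have been reverse-engineered precisely to make the $j=k$ case give $1$.

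The main obstacle, and the step requiring genuine care rather than bookkeeping, is justifying the interchange of integration and the ``$\int_0^\infty x^{j-s}dx = \delta$'' manipulation rigorously — that is, controlling convergence and contour placement. The function $x^{j-s}$ is not integrable on $[0,\infty)$ for any fixed $s$, so the clean statement must be obtained by first establishing that $Q_k$, defined by \eqref{eq: int_rep_qkx} with $\Re s = c$, is genuinely a combination of the $w_j$'s (so that its behavior as $x\to 0^+$ and $x\to\infty$ is controlled by known asymptotics of Meijer G-functions — algebraic decay at infinity coming from the $\Gamma$-ratios, and boundedness or integrable blow-up near $0$), and only then computing the moments either by the span representation directly (reducing to $\int_0^\infty x^j w_i(x)\,dx$, known Mellin transforms of G-functions evaluated at $s=j+1$) or by a careful contour-shift argument picking up the single relevant pole. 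I would also need $c$ chosen so that the contour separates the poles of $\prod_{i=1}^M\Gamma(s+\nu_i)$ (at $s = -\nu_i, -\nu_i-1,\dots$, all $\le 0$) from the point $s=j+1$; taking any $0 < c$ works for the pole separation, and one checks the integrand decays along vertical lines via Stirling. Once the span membership is in hand, the moment computation is forced, and the only remaining piece — that the degree of $P_k$ and the biorthogonality \eqref{eq: biorth} are consistent — follows from the general polynomial-ensemble structure recalled in the introduction, so I would not re-prove it here. Establishing the $P_k$ integral representation, if the proposition also asserts it, would proceed dually: write $P_k$ via a contour integral whose integrand is (rational in $s$) $\times$ $x^{s}$ or $x^{s-1}$ with the reciprocal Mellin multiplier, and verify $\int_0^\infty P_k(x) w_j(x)\,dx = 0$ for $j<k$ by the same residue/orthogonality mechanism — the vanishing again coming from a Pochhammer factor $(s)_k$-type numerator sampled at the poles of $\mathcal{M}[w_j]$.
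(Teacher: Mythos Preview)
Your proposal is correct and takes essentially the same approach as the paper: characterize the span of $w_0,\dots,w_k$ via the common Mellin multiplier $\prod_{j=1}^M\Gamma(s+\nu_j)/\Gamma(s+l-2n+1)$, verify that $q_k$ has the right rational form to lie in that span, and then compute the moments by evaluating the Mellin transform at $s=j+1$, with the vanishing for $j<k$ coming from the zeros of $(s-k)_k$ and the normalization at $j=k$ from the explicit prefactor. The paper states the span characterization a bit more crisply (it is exactly those $q$ for which $(s+l-2n+1)_k\,q(s)$ is a polynomial of degree $\le k$) and invokes Mellin inversion directly rather than your contour-shift/$\delta$-heuristic for the moment computation, but the substance of the argument is identical.
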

Recall that $\nu_0 = 0$ and that the Pochhammer symbol is given by
\begin{equation*}
	(a)_k = a(a+1) \cdots (a+k-1) = \frac{\Gamma(a+k)}{\Gamma(a)}.
\end{equation*}
\begin{proof}
The functions $w_k$ from \eqref{eq: wk_truncation} have the integral representation
\begin{equation} \label{eq: wk_integral} 
	w_k(x) = \frac{1}{2\pi i} \int_{c-i \infty}^{c + i \infty} 
	\frac{ (s+\nu_1)_k}{(s+l - 2n + 1)_k}  \frac{ \prod_{j=1}^M \Gamma(s+\nu_j)}{\Gamma(s+l-2n+1)} x^{-s} ds, \qquad x > 0.
	\end{equation}
Then it is easy to see that the linear span of $w_0, \ldots, w_k$ consists of all functions as in the right-hand side
of \eqref{eq: int_rep_qkx} with $q_k$ being a rational function in $s$ such that
$(s+l-2n + 1)_k q_k(s)$ is a polynomial of degree $\leq k$. Since \eqref{eq: qk} is of that type, we see that
$Q_k$ belongs to the linear span of $w_0, \ldots, w_k$.

By the properties of the Mellin transform, we have from \eqref{eq: int_rep_qkx} that
\begin{equation} \label{eq: qk_Mellin} 
	\int_0^{\infty} x^{s-1} Q_k(x) dx = q_k(s) \frac{\prod_{j = 1}^M \Gamma(s + \nu_j)}{\Gamma(s + l - 2n + 1)}, \qquad s > 0. 
	\end{equation}
Since $q_k$ has zeros in $1, \ldots, k$ we find from \eqref{eq: qk_Mellin} that 
\[ \int_0^{\infty} x^j Q_k(x) dx = 0, \qquad \text{for } j = 0, \ldots, k-1. \]
The prefactor in \eqref{eq: qk} has been chosen so that
\[ \int_0^{\infty} x^k Q_k(x) dx = 1 \]
as can be readily verified from \eqref{eq: qk} and \eqref{eq: qk_Mellin}. Thus \eqref{eq: Qk_character} holds
and the proposition is proved.
\end{proof}

Notice that we can rewrite $Q_k$ as a Meijer G-function
\begin{equation*}
Q_k(x) = \frac{\Gamma(l - 2n + 2k + 2)}{\prod_{i = 0}^M \Gamma(k + 1 + \nu_i)} \MeijerG{M+1}{0}{2}{M+1}{-k, l - 2n + k + 1}{\nu_0, \nu_1, \ldots, \nu_M}{x}.
\end{equation*}
There is a similar integral representation for $P_k$.

\begin{proposition}
We have
\begin{equation} \label{eq: int_rep_pnx}
P_k(x) = \frac{\prod_{j= 0}^M \Gamma(k + 1 + \nu_j)}{\Gamma(l - 2n + 2k + 1)} 
	\frac{1}{2\pi i} \oint_{\Sigma_k} \Gamma(t-k) \frac{\Gamma(t + l - 2n + k + 1)}{\prod_{j = 0}^M \Gamma(t + 1 + \nu_j)} x^{t} dt
\end{equation}
where $\Sigma_k$ is a closed contour encircling  the interval $[0, k]$ once in the positive direction 
and such that $\Re t > -1$ for $t \in \Sigma_k$.
\end{proposition}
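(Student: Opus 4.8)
The plan is to show that the function on the right-hand side of \eqref{eq: int_rep_pnx}, which I denote by $\widetilde P_k$, is a monic polynomial of degree $k$ satisfying the orthogonality relations \eqref{eq: Pk_character}. Since a monic polynomial of degree $k$ obeying \eqref{eq: Pk_character} is unique, this forces $\widetilde P_k = P_k$.

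\textbf{Step 1 (collapsing the contour).} First I would deform $\Sigma_k$ in \eqref{eq: int_rep_pnx} onto the enclosed poles. Inside $\Sigma_k$ the only poles of the integrand are the simple poles of $\Gamma(t-k)$ at $t = 0, 1, \ldots, k$: the factor $1/\prod_{j=0}^M\Gamma(t+1+\nu_j)$ is entire, while the poles of $\Gamma(t+l-2n+k+1)$ all have real part $\le -(k+1)$ and hence lie outside $\Sigma_k$, since $\Re t > -1$ there. Using $\Res_{t=m}\Gamma(t-k) = (-1)^{k-m}/(k-m)!$ I obtain
\[
	\widetilde P_k(x) = \frac{\prod_{j=0}^M\Gamma(k+1+\nu_j)}{\Gamma(l-2n+2k+1)}\sum_{m=0}^k \frac{(-1)^{k-m}}{(k-m)!}\;\frac{\Gamma(m+l-2n+k+1)}{\prod_{j=0}^M\Gamma(m+1+\nu_j)}\,x^m .
\]
This is a polynomial of degree at most $k$, and the coefficient of $x^k$ equals $\Gamma(l-2n+2k+1)/\Gamma(l-2n+2k+1) = 1$, so $\widetilde P_k$ is monic of degree $k$.

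\textbf{Step 2 (orthogonality).} Next I would verify \eqref{eq: Pk_character}. From \eqref{eq: wk_integral} and Mellin inversion, $\int_0^\infty x^{s-1}w_j(x)\,dx = \frac{(s+\nu_1)_j}{(s+l-2n+1)_j}\frac{\prod_{i=1}^M\Gamma(s+\nu_i)}{\Gamma(s+l-2n+1)}$; since $w_j$ decays exponentially at infinity and grows no faster than a negative power of $x$ at the origin, $\int_0^\infty x^m w_j(x)\,dx$ converges and equals this expression at $s = m+1$. Substituting the monomial expansion of $\widetilde P_k$ from Step 1, writing $L := l-2n$, and simplifying with $\nu_0 = 0$ via $\prod_{i=1}^M\Gamma(m+1+\nu_i)\big/\prod_{i=0}^M\Gamma(m+1+\nu_i) = 1/m!$ together with $\Gamma(m+L+k+1)/\Gamma(m+L+2) = (m+L+2)_{k-1}$, I find for $0 \le j \le k-1$
\[
	\int_0^\infty \widetilde P_k(x)\,w_j(x)\,dx = \frac{\prod_{i=0}^M\Gamma(k+1+\nu_i)}{k!\,\Gamma(L+2k+1)}\sum_{m=0}^k (-1)^{k-m}\binom km R_j(m),
\]
where $R_j(u) := (u+1+\nu_1)_j\,(u+L+j+2)_{k-1-j}$ is a polynomial in $u$ of degree $j + (k-1-j) = k-1$. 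The inner sum is the $k$-th forward finite difference $(\Delta^k R_j)(0)$, which vanishes since $\deg R_j = k-1 < k$. This establishes \eqref{eq: Pk_character}, hence $\widetilde P_k = P_k$.

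\textbf{Main obstacle.} The delicate point is the bookkeeping in Step 2: one must track the $\Gamma$-quotients carefully enough to recognise that, after all cancellations, the summand is a binomial coefficient times a polynomial in $m$ whose degree is $k-1$ — crucially bounded by $k-1$ \emph{uniformly} in $j \in \{0,\ldots,k-1\}$ — so that the finite-difference annihilation applies. A secondary point worth an explicit word is the legitimacy of integrating the Mellin–Barnes representation of $\widetilde P_k$ against $w_j$ term by term; this is justified by the exponential decay of $w_j$ and by $\widetilde P_k$ being an honest polynomial.
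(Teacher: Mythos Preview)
Your proof is correct. Step 1 is identical to the paper's argument: both collapse the contour onto the simple poles of $\Gamma(t-k)$ at $t=0,\ldots,k$ and read off the monic polynomial from the residues.

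In Step 2 the two proofs diverge in presentation but not in substance. The paper keeps the contour integral form of $P_k$, interchanges the $t$-integral with $\int_0^\infty(\cdot)\,w_j(x)\,dx$, and simplifies the resulting integrand to the rational function
\[
\frac{(t+1+\nu_1)_j\,(t+l-2n+j+2)_{k-1-j}}{(t-k)_{k+1}},
\]
which is $O(t^{-2})$ at infinity with all poles inside $\Sigma_k$; deforming $\Sigma_k$ to infinity then gives zero. You instead expand $P_k$ into its finite monomial sum, compute the moments $\int_0^\infty x^m w_j(x)\,dx$ from the Mellin data, and reduce everything to $\sum_{m=0}^k(-1)^{k-m}\binom km R_j(m)=(\Delta^k R_j)(0)=0$ with $\deg R_j=k-1$. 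These are the same identity: your finite-difference sum is exactly ($k!$ times) the sum of residues of the paper's rational function, and the vanishing of a $k$-th difference of a degree-$(k-1)$ polynomial is equivalent to the $O(t^{-2})$ decay. Your route sidesteps the interchange-of-integrals justification; the paper's route avoids the explicit moment bookkeeping. Either way the key observation is that the numerator degree is $k-1$, one less than needed to survive.
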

\begin{proof}
The integrand in the right-hand side of \eqref{eq: int_rep_pnx} is a meromorphic function with simple poles $0, 1, \ldots, k$ 
inside the contour. Since $\Re t > -1$ for $t \in \Sigma_k$, we have that the other poles are outside.  
Hence, by the residue theorem we have that the right-hand side of \eqref{eq: int_rep_pnx}
defines a polynomial of degree at most $k$, and in fact
\begin{align} \nonumber
	P_k(x) & = \frac{\prod_{j = 0}^M \Gamma(k + 1 + \nu_j)}{\Gamma(l - 2n + 2k + 1)} \sum_{t = 0}^k 
		\Res_{t}\left(\Gamma(t-k) \frac{\Gamma(t + l - 2n + k + 1)}{\prod_{j = 0}^M \Gamma(t + 1 + \nu_j)}\right) x^t \\
	& = \label{eq: sum_rep_pnx}
			\sum_{t = 0}^k \frac{(-1)^{k - t}}{(k - t)!} \frac{\Gamma(l - 2n + k + t + 1)}{\Gamma(l - 2n + 2k + 1)}   
			\prod_{j = 0}^M \frac{\Gamma(k + 1 + \nu_j)}{\Gamma(t + 1 + \nu_j)} x^t.
\end{align}
The leading coefficient is $1$ and so $P_k$ is indeed  a monic polynomial of degree $k$.

To check the orthogonality condition \eqref{eq: Pk_character}, we recall that by \eqref{eq: wk_integral} and the
properties of the Mellin transform
\begin{equation*}
	\int_0^{\infty} x^{s-1} w_j(x) dx = \frac{ (s+\nu_1)_j \prod_{i=1}^M \Gamma(s+\nu_i)}{(s+l - 2n + 1)_j \Gamma(s+l-2n+1)}.
\end{equation*}
 And so, if we use \eqref{eq: int_rep_pnx} and interchange the integrals
\begin{multline} \label{eq: Pkwj_integral}
\int_0^{\infty} P_k(x) w_j(x) dx  \\
	=  \frac{c_k}{2\pi i} 
	\oint_{\Sigma_k} \Gamma(t-k) \frac{\Gamma(t + l - 2n + k + 1)}{\prod_{i = 0}^M \Gamma(t + 1 + \nu_i)} 
	\frac{(t+1+\nu_1)_j}{(t + l - 2n + 2)_j} \frac{\prod_{i=1}^M \Gamma(t+1+\nu_i)}{\Gamma(t + l - 2n + 2)} dt
	\end{multline}
with $c_k = \ds \frac{\prod_{i = 0}^M \Gamma(k + 1 + \nu_i)}{\Gamma(l - 2n + 2k + 1)}$.
In case $j = 0, \ldots, k-1$, the integrand in \eqref{eq: Pkwj_integral} simplifies to
\[   \frac{(t+1+\nu_1)_j  (t+ l - 2n + j + 2)_{k-j-1}}{(t-k)_{k+1}} \]
which is a rational function in $t$ with poles at $t=0, 1, \ldots, k$ only, and these are inside the contour $\Sigma_k$.
In addition it is $O(t^{-2})$ as $t \to \infty$.
Thus by moving the contour $\Sigma_k$ to infinity, we see that \eqref{eq: Pkwj_integral} vanishes
for $j=0, 1, \ldots, k-1$, and we obtain \eqref{eq: Pk_character}.
\end{proof}

Formula \eqref{eq: sum_rep_pnx} shows that $P_k$ is a hypergeometric polynomial, namely
\begin{equation*}
P_k(x) = (-1)^k \prod_{i=1}^M \frac{\Gamma(k+1+\nu_i)}{\Gamma(\nu_i + 1)} \frac{\Gamma(l - 2n + k + 1)}{\Gamma(l - 2n + 2k + 1)} \HyperG{2}{M}{-k, l - 2n + k + 1}{1+\nu_1, \ldots, 1+\nu_M}{x}.
\end{equation*}
Finally, we notice that $P_k$ can also be identified as a Meijer G-function
\begin{equation*}
P_k(x) = - \frac{\prod_{i = 0}^M \Gamma(k + 1 + \nu_i)}{\Gamma(l - 2n + 2k + 1)} \MeijerG{0}{2}{2}{M+1}{k + 1, -(l - 2n + k)}{-\nu_0, \ldots, -\nu_M}{x}.
\end{equation*}


\subsection{Hard edge limit} \label{subsec: hard edge}

We proceed to obtain a double integral representation for the kernel $K_n$.  

\begin{proposition}
We have
\begin{multline} \label{eq: int_rep_knxy}
 K_n(x, y) = \frac{1}{(2\pi i)^2} \int_{-1/2 - i\infty}^{-1/2 + i\infty} ds 
	\oint_{\Sigma} dt \prod_{j = 0}^M \frac{\Gamma(s + 1 + \nu_j)}{\Gamma(t+1+\nu_j)} \\
	\frac{\Gamma(t+1-n) \Gamma(t + l - n + 1)}{\Gamma(s+1-n) \Gamma(s + l - n + 1)} \frac{x^t y^{-s-1}}{s-t}
\end{multline}
where $\Sigma$ is a closed contour encircling $0, 1, \ldots, n$ once in the positive direction 
such that $\Re t > -1/2$ for $t \in \Sigma$.
\end{proposition}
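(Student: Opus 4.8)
The plan is to substitute the contour representations of $P_k$ and $Q_k$ from the two preceding propositions into $K_n(x,y)=\sum_{k=0}^{n-1}P_k(x)Q_k(y)$ and to carry out the sum over $k$ explicitly. First I would write $Q_k(y)=\frac{1}{2\pi i}\int_{c-i\infty}^{c+i\infty}q_k(s)\frac{\prod_{j=1}^{M}\Gamma(s+\nu_j)}{\Gamma(s+l-2n+1)}y^{-s}\,ds$, take $P_k$ from \eqref{eq: int_rep_pnx}, and deform every contour $\Sigma_k$ to a single contour $\Sigma$ enclosing $0,1,\dots,n$ with $\Re t>-1/2$; this is allowed because the remaining poles of the integrand of \eqref{eq: int_rep_pnx}, which come from $\Gamma(t+l-2n+k+1)$, lie at $\Re t\le-(l-2n+1)\le-1$ as $l\ge 2n$. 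The $k$-dependent $\Gamma$-prefactors in \eqref{eq: int_rep_pnx} and \eqref{eq: qk} cancel, leaving the factor $l-2n+2k+1$, and after interchanging the finite sum with the two integrals everything reduces to evaluating the finite sum
\[
 S(s,t)=\sum_{k=0}^{n-1}(l-2n+2k+1)\frac{(s-k)_k}{(s+l-2n+1)_k}\,\Gamma(t-k)\,\Gamma(t+l-2n+k+1).
\]

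Evaluating $S(s,t)$ is the heart of the argument and the step I expect to cost the most work. Setting $m=l-2n$ and using elementary $\Gamma$-identities such as $(s-k)_k=\Gamma(s)/\Gamma(s-k)$ and $\Gamma(t-k)=(-1)^k\Gamma(t)/(1-t)_k$, one rewrites $S(s,t)=\Gamma(t)\Gamma(t+m+1)\sum_{k=0}^{n-1}(m+2k+1)u_k$ with $u_k=\frac{(1-s)_k(t+m+1)_k}{(1-t)_k(s+m+1)_k}$, a terminating very-well-poised sum. I would evaluate it by producing an explicit antidifference: a short computation, whose crux is the identity $(1-s+k)(t+m+1+k)-(k-t)(s+m+k)=(t+1-s)(m+2k+1)$, shows that $(m+2k+1)u_k=g(k+1)-g(k)$ with $g(k)=\frac{(k-t)(s+m+k)}{t+1-s}u_k$, so the sum telescopes to $g(n)-g(0)$.

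It then remains to reassemble the two contour integrals and to check that the spurious pieces drop out. The boundary term $g(0)$ contributes to the $t$-integral an integrand that is analytic inside $\Sigma$ once $c$ is taken large enough that the pole at $t=s-1$ lies outside $\Sigma$, so it integrates to zero. In the surviving term $g(n)$ I would simplify the Pochhammer factors via $\Gamma(t)/(1-t)_{n-1}=(-1)^{n-1}\Gamma(t+1-n)$, $\Gamma(t+m+1)(t+m+1)_n=\Gamma(t+l-n+1)$ and $\Gamma(s+m+1)(s+m+1)_{n-1}=\Gamma(s+l-n)$; the $\Gamma$-factors then assemble into precisely the combination in \eqref{eq: int_rep_knxy} up to an $s$-shift. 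Shifting $s\mapsto s+1$ matches the factor $y^{-s-1}$, produces the missing $\Gamma(s+1)=\Gamma(s+1+\nu_0)$ out of $(-s)_n=(-1)^n\Gamma(s+1)/\Gamma(s+1-n)$, and turns $1/(t+1-s)$ into $-1/(s-t)$; finally moving the $s$-line from $\Re s=c-1$ to $\Re s=-1/2$ crosses only the pole at $s=t$, whose residue produces a multiple of $\oint_\Sigma x^t y^{-t-1}\,dt=y^{-1}\oint_\Sigma(x/y)^t\,dt=0$ since $(x/y)^t$ is entire. The accumulated signs $(-1)^{n-1}$ (telescoping), $(-1)^n$ (from $(-s)_n$) and the sign in $1/(t-s)=-1/(s-t)$ multiply to $+1$, giving \eqref{eq: int_rep_knxy}. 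The $\Gamma$-function bookkeeping is routine; the points that genuinely need care are the two contour deformations and the vanishing of the $g(0)$ and $s=t$ terms.
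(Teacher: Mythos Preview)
Your proof is correct and follows essentially the same route as the paper: substitute the contour integrals for $P_k$ and $Q_k$, telescope the $k$-sum using the identity that produces the factor $1/(t+1-s)$ (equivalently, the paper multiplies by $s-t-1$), observe that the $k=0$ boundary term has no $t$-poles inside $\Sigma$, and finish with the shift $s\mapsto s+1$. Your Pochhammer manipulations unwind to exactly the Gamma-ratio telescoping identity the paper states.

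The one genuine difference is your handling of the $s$-contour. You take $c$ large so that the pole $t=s-1$ lies outside $\Sigma$, and then at the end you must move the $s$-line from $\Re s=c-1$ down to $\Re s=-1/2$, picking up (and discarding, via the entire-function argument) the residue at $s=t$. The paper instead takes $c=1/2$ from the outset: with $\Re s=1/2$ and $\Re t>-1/2$ one has $\Re(s-1)=-1/2$, so the pole $t=s-1$ already lies to the left of $\Sigma$, and after the final shift the line lands directly at $\Re s=-1/2$ without any pole-crossing. This choice buys the paper a slightly shorter argument, avoiding your last residue step entirely; your approach is a valid alternative but does a little more work than necessary.
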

\begin{proof}
The correlation kernel \eqref{eq: corr_kernel_general} can be written as
\begin{multline*}
K_n(x, y) = \frac{1}{(2\pi i)^2} \int_{c - i\infty}^{c + i\infty} ds 
	\oint_{\Sigma} dt \prod_{j = 0}^M \frac{\Gamma(s + \nu_j)}{\Gamma(t+1+\nu_j)} \\
	\sum_{k = 0}^{n-1} (l - 2n + 2k + 1) \frac{\Gamma(t-k)}{\Gamma(s-k)}\frac{\Gamma(t + l - 2n + k + 1)}{\Gamma(s + l - 2n + k + 1)} x^t y^{-s}
\end{multline*}
where we used the representations \eqref{eq: int_rep_pnx} and \eqref{eq: int_rep_qkx} for $P_k(x)$ 
and $Q_k(y)$. By using the functional equation $\Gamma(z+1) = z \Gamma(z)$, one can see that
\begin{multline*}
(s-t-1) (l - 2n + 2k + 1) \frac{\Gamma(t-k)}{\Gamma(s-k)}\frac{\Gamma(t + l - 2n + k + 1)}{\Gamma(s + l - 2n + k + 1)} \\
 = \frac{\Gamma(t-k)}{\Gamma(s-k-1)}\frac{\Gamma(t + l - 2n + k + 2)}{\Gamma(s + l - 2n + k + 1)} - \frac{\Gamma(t-k+1)}{\Gamma(s-k)}\frac{\Gamma(t + l - 2n + k + 1)}{\Gamma(s + l - 2n + k)}
\end{multline*}
which means that we have a telescoping sum 
\begin{multline} \label{eq: telescope}
(s-t-1) \sum_{k = 0}^{n-1}(l - 2n + 2k + 1) \frac{\Gamma(t-k)}{\Gamma(s-k)}\frac{\Gamma(t + l - 2n + k + 1)}{\Gamma(s + l - 2n + k + 1)}  \\
	= \frac{\Gamma(t-n+1)}{\Gamma(s-n)}\frac{\Gamma(t + l - n + 1)}{\Gamma(s + l - n)} - \frac{\Gamma(t+1)}{\Gamma(s)}
	\frac{\Gamma(t + l - 2n + 1)}{\Gamma(s + l - 2n)}.
\end{multline}
By taking $c = 1/2$ and letting $\Sigma$ encircle $0, 1, \ldots, n$ such that $\Re(t) > -1/2$ for $t \in \Sigma$, we ensure that $s-t-1 \neq 0$ whenever $s \in c + i\R$ and $t \in \Sigma$. And so we obtain that
\begin{multline*}
K_n(x, y) = \\
\frac{1}{(2\pi i)^2} \int_{1/2 - i\infty}^{1/2 + i\infty} ds \oint_{\Sigma} dt 
	\prod_{j = 0}^M \frac{\Gamma(s + \nu_j)}{\Gamma(t+1+\nu_j)}\frac{\Gamma(t-n+1)}{\Gamma(s-n)}\frac{\Gamma(t + l - n + 1)}{\Gamma(s + l - n)} \frac{x^t y^{-s}}{s-t-1} \\
- \frac{1}{(2\pi i)^2} \int_{1/2 - i\infty}^{1/2 + i\infty} ds \oint_{\Sigma} dt \prod_{j = 0}^M \frac{\Gamma(s + \nu_j)}{\Gamma(t+1+\nu_j)}\frac{\Gamma(t+1)}{\Gamma(s)}\frac{\Gamma(t + l - 2n + 1)}{\Gamma(s + l - 2n)} \frac{x^t y^{-s}}{s-t-1}.
\end{multline*}
The integrand of the second double integral has no singularities inside $\Sigma$ and hence 
the $t$-integral vanishes by Cauchy's theorem. By finally making the 
change of variable $s \mapsto s+1$ in the first double integral, we obtain \eqref{eq: int_rep_knxy}.
\end{proof}

\begin{remark}
The proofs in subsections \ref{subsec: int_rep} and \ref{subsec: hard edge}
are modelled after those in  \cite{Kuijlaars_Zhang}, but there are slight differences in
all proofs. 

We want to emphasize one difference which has to do with the telescoping sum \eqref{eq: telescope}.
The left-hand side of \eqref{eq: telescope} has the factors $l - 2n +k +1$ which come
from the product of the prefactors in \eqref{eq: qk} and \eqref{eq: int_rep_pnx}. The
corresponding prefactors in \cite[formulas (3.2) and (3.8)]{Kuijlaars_Zhang} are each other
inverses, and as a consequence there is no such factor. However it is remarkable that 
the factors $l-2n+k+1$ are actually necessary for the telescoping sum \eqref{eq: telescope} 
to hold.
\end{remark}

We notice that we can rewrite the kernel in terms of Meijer G-functions :

\begin{corollary}
We have
\begin{equation} \label{eq: meijer_rep_knxy}
K_n(x, y) = \int_0^1 \MeijerG{0}{2}{2}{M+1}{n, -(l - n)}{-\nu_0, \ldots, -\nu_M}{ux} 
\MeijerG{M+1}{0}{2}{M+1}{-n, l - n}{\nu_0, \ldots, \nu_M}{uy} du.
\end{equation}
\end{corollary}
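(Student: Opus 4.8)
The plan is to start from the double integral representation \eqref{eq: int_rep_knxy} for $K_n(x,y)$ and recognize the $s$-integral and the $t$-integral separately as Mellin--Barnes integrals for Meijer G-functions. First I would introduce the Feynman-type parametrization $\frac{1}{s-t} = \int_0^1 u^{s-t-1}\,du$, valid because on the contours of \eqref{eq: int_rep_knxy} one has $\Re(s-t) = -1/2 - \Re t < 0$ for $t\in\Sigma$, so the $u$-integral converges at $u=0$ and equals $\frac{1}{s-t}$. Substituting this in and pulling the $u$-integral outside (justified by absolute convergence, which follows from the Gamma-function decay along the vertical $s$-line and compactness of $\Sigma$), the integrand factors as a function of $ux$ times a function of $uy$, so that
\[
K_n(x,y) = \int_0^1 \left( \frac{1}{2\pi i}\oint_{\Sigma} \Gamma(t+1-n)\,\Gamma(t+l-n+1)\,\frac{(ux)^t}{\prod_{j=0}^M \Gamma(t+1+\nu_j)}\,dt \right) \left( \frac{1}{2\pi i}\int_{-1/2-i\infty}^{-1/2+i\infty} \frac{\prod_{j=0}^M \Gamma(s+1+\nu_j)}{\Gamma(s+1-n)\,\Gamma(s+l-n+1)}\,(uy)^{-s-1}\,ds \right) du.
\]

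Next I would identify each bracketed factor with a Meijer G-function via the Mellin--Barnes definition recalled in the appendix. The $t$-integral is a loop integral encircling the poles of $\Gamma(t+1-n)$ and $\Gamma(t+l-n+1)$ — i.e.\ the "numerator'' Gamma factors sit in the denominator positions of the G-symbol — which after the substitution $t \mapsto -t$ (or directly matching conventions) is exactly $\MeijerG{0}{2}{2}{M+1}{n, -(l-n)}{-\nu_0,\ldots,-\nu_M}{ux}$; one checks that the parameter lists match and that $\Sigma$ is a valid contour of the required type for this G-function. The $s$-integral is a vertical-line Mellin--Barnes integral whose integrand has the $\prod\Gamma(s+1+\nu_j)$ in the numerator and $\Gamma(s+1-n)\Gamma(s+l-n+1)$ in the denominator; with the substitution $s \mapsto -s-1$ this is the Mellin--Barnes representation of $\MeijerG{M+1}{0}{2}{M+1}{-n, l-n}{\nu_0,\ldots,\nu_M}{uy}$, and one must verify that the vertical contour $-1/2 + i\R$ separates the two families of poles as required by the G-function's contour prescription (the poles of $\prod\Gamma(s+1+\nu_j)$ lie at $s \le -1$, hence to the left, consistent with $\Re s = -1/2$).

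The main obstacle is purely bookkeeping: getting the Meijer G-function index data $(m,n,p,q)$ and the two parameter vectors exactly right under the sign-flipping substitutions, and checking that the contours in \eqref{eq: int_rep_knxy} really are admissible contours for the respective G-functions (in particular that no pole of the denominator Gammas of the original integrand is accidentally enclosed, and that the growth conditions for interchanging $\int_0^1$ with the Barnes integrals hold — this uses $l \ge 2n$, or at least $l - n \ge 0$, to control the decay of $\Gamma(s+l-n+1)^{-1}$). Once the two bracketed integrals are matched to the claimed G-functions, \eqref{eq: meijer_rep_knxy} follows immediately. As a consistency check one can note that formally setting the truncation away (the degenerate limit relating \eqref{eq: wk_truncation} to \eqref{wk1}) reduces \eqref{eq: meijer_rep_knxy} to the known Meijer G-representation \eqref{eq: Knu_integral} in the appropriate scaling limit, and that the Mellin transforms in $x$ and $y$ of both sides agree, which is really the content of the computation.
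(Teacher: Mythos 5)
Your overall strategy is exactly the paper's: write $\frac{1}{s-t}$ as an integral over $u\in[0,1]$, factor the double integral \eqref{eq: int_rep_knxy} into a product of two Barnes integrals under a single $\int_0^1 du$, and identify each factor as a Meijer G-function from the definition \eqref{eq: def_meijer_g}. The one concrete error is in the parametrization step: $\int_0^1 u^{s-t-1}\,du$ converges at $u=0$ precisely when $\Re(s-t)>0$, whereas on the contours of \eqref{eq: int_rep_knxy} one has $\Re(s-t)=-\tfrac12-\Re t<0$, so the integral you write down diverges and your stated justification has the inequality backwards; note also that your displayed factorization carries $(ux)^t(uy)^{-s-1}=x^ty^{-s-1}u^{t-s-1}$, which does not match the exponent $u^{s-t-1}$ in your claimed identity. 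The correct identity is $\frac{x^t y^{-s-1}}{s-t}=-\int_0^1 (ux)^t(uy)^{-s-1}\,du$, convergent because $\Re(t-s)=\Re t+\tfrac12>0$, so your factored expression should carry an overall minus sign. That sign is then absorbed when you substitute $t\mapsto -t$ to match the $t$-integral with $\MeijerG{0}{2}{2}{M+1}{n, -(l-n)}{-\nu_0, \ldots, -\nu_M}{ux}$, since the substitution reverses the orientation of the closed contour $\Sigma$; and the $s$-integral is matched to $\MeijerG{M+1}{0}{2}{M+1}{-n, l-n}{\nu_0, \ldots, \nu_M}{uy}$ by the shift $s\mapsto s-1$ alone, not by $s\mapsto -s-1$ as you state (the integrand is already of the form $\prod\Gamma(s'+b_j)/\prod\Gamma(s'+a_j)\,(uy)^{-s'}$ with $s'=s+1$). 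With these sign and substitution corrections your argument coincides with the paper's proof.
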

\begin{proof}
Since
\begin{equation*}
\frac{x^t y^{-s-1}}{s-t} = -\int_0^1 (ux)^t (uy)^{-s-1} du,
\end{equation*}
the kernel \eqref{eq: int_rep_knxy} can be rewritten as
\begin{align*}
K_n(x, y) = &-\int_0^1 \left(\frac{1}{2\pi i} \oint_{\Sigma} 
	\frac{\Gamma(t+1-n) \Gamma(t + l - n + 1)}{\prod_{j = 0}^M \Gamma(t+1+\nu_j)} (ux)^t dt\right) \\
&\times \left(\frac{1}{2\pi i} \int_{-1/2 - i\infty}^{-1/2 + i\infty}  
	\frac{\prod_{j = 0}^M \Gamma(s + 1 + \nu_j)}{\Gamma(s+1-n) \Gamma(s + l - n + 1)} (uy)^{-s-1} ds \right) du.
\end{align*}
By the definition of a Meijer G-function and making the change of variables $t \mapsto -t$ and $s \mapsto s-1$, we obtain the identity 
\eqref{eq: meijer_rep_knxy}.
\end{proof} 

Using the integral representation for $K_n$, we can derive the scaling limit at the hard edge.
\begin{theorem}
With $\nu_1, \ldots, \nu_M$ being fixed and with $l$ growing at least as $2n$, we have
\begin{equation} \label{eq: lim_knxy}
\lim_{n \to \infty} \frac{1}{(l - n)n} K_n\left(\frac{x}{(l - n)n}, \frac{y}{(l - n)n}\right) = K_{\nu_1, \ldots, \nu_M}(x, y).
\end{equation}
uniformly for $x, y$ in compact subsets of the real positive axis, where
\begin{multline} \label{eq: int_rep_k_nu_m}
	K_{\nu_1, \ldots, \nu_M}(x, y) \\
	= \frac{1}{(2\pi i)^2} \int_{-1/2 - i\infty}^{-1/2 + i\infty} ds \oint_{\Sigma} dt 
	\prod_{j = 0}^M \frac{\Gamma(s + 1 + \nu_j)}{\Gamma(t+1+\nu_j)} \frac{\sin(\pi s)}{\sin(\pi t)} \frac{x^t y^{-s-1}}{s-t}.
\end{multline}
The contour $\Sigma$ starts at $+\infty$ in the upper half plane and returns to $+\infty$ in 
the lower half plane encircling the positive real axis such that $\Re t > -1/2$ for $t \in \Sigma$ 
(see also Figure \ref{fig: plot_contours_int_k_nu_m} on
page \pageref{fig: plot_contours_int_k_nu_m}).
\end{theorem}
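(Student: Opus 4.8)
The plan is to start from the double-integral representation \eqref{eq: int_rep_knxy} for the finite-$n$ kernel and trace through the rescaling $x \mapsto x/((l-n)n)$, $y \mapsto y/((l-n)n)$, keeping $\nu_1,\dots,\nu_M$ fixed while $n\to\infty$ with $l \geq 2n$. First I would substitute the rescaled arguments into \eqref{eq: int_rep_knxy}, then make a change of variables in both integration variables to absorb the factor $(l-n)n$. The natural scaling is $t \mapsto t$, $s \mapsto s$ but with the understanding that the factors $((l-n)n)^{-t}$ and $((l-n)n)^{s+1}$ coming from $x^t y^{-s-1}$ must be balanced against the Gamma-function ratios. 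The key is the asymptotic behaviour of the ratio
\[
	\frac{\Gamma(t+1-n)\,\Gamma(t+l-n+1)}{\Gamma(s+1-n)\,\Gamma(s+l-n+1)}
\]
as $n\to\infty$. Using the reflection formula $\Gamma(z)\Gamma(1-z) = \pi/\sin(\pi z)$ to rewrite $\Gamma(t+1-n) = \pi/(\sin(\pi(t+1-n))\Gamma(n-t))$ and similarly for $s$, one gets a factor $\sin(\pi s)/\sin(\pi t)$ (the integer shift by $n$ only changes the sign, which cancels between numerator and denominator) together with $\Gamma(n-s)/\Gamma(n-t)$. Then Stirling's formula gives $\Gamma(n-s)/\Gamma(n-t) \sim n^{t-s}$ and $\Gamma(t+l-n+1)/\Gamma(s+l-n+1) \sim (l-n)^{t-s}$ as $n\to\infty$ (here the hypothesis $l \geq 2n$, equivalently $l-n \geq n \to \infty$, is what we need). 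Hence the whole Gamma ratio behaves like $((l-n)n)^{t-s}\,\sin(\pi s)/\sin(\pi t)$, and the factor $((l-n)n)^{t-s}$ exactly cancels the rescaling factor from $x^t y^{-s-1}$, leaving precisely the integrand of \eqref{eq: int_rep_k_nu_m}.

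Next I would address the contours. In \eqref{eq: int_rep_knxy} the contour $\Sigma$ encircles $0,1,\dots,n$, so as $n$ grows it must be allowed to grow as well; in the limit it becomes an unbounded contour wrapping the entire positive real axis, starting and returning at $+\infty$, exactly as in Theorem \ref{thm: scaling_KZ} and Figure \ref{fig: plot_contours_int_k_nu_m}. The vertical contour $\Re s = -1/2$ is already in the correct position and is held fixed. I would fix, once and for all, a single contour $\Sigma$ in $\{\Re t > -1/2\}$ encircling the positive real axis (not depending on $n$), check that for $n$ large enough the poles $0,1,\dots,n-1$ of the finite-$n$ integrand that need to be enclosed are indeed inside it — but this is false for $n$ large, so instead I would deform: the finite-$n$ integrand $\Gamma(t+1-n)\Gamma(t+l-n+1)/\prod\Gamma(t+1+\nu_j)$ has poles only at $t = 0,1,\dots,n-1$ (the poles of $\Gamma(t+1-n)$ at $t \le n-1$; note $\Gamma(t+l-n+1)$ has poles at $t \le n-l-1 < -1$, outside), so a contour encircling $[0,n-1]$ suffices. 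To pass to the limit with a fixed contour, the cleanest route is the reflection-formula rewriting done above: once the integrand is written with the $\sin(\pi t)$ in the denominator and the ratio $\Gamma(n-s)/\Gamma(n-t)$ times an entire-in-$t$ prefactor, the integrand is meromorphic in $t$ with poles only at the nonnegative integers (from $1/\sin(\pi t)$), and $\Gamma(n-t)$ in the denominator is entire and nonvanishing for $t$ bounded, so on a fixed bounded-away-from-$\mathbb Z_{\geq 0}$ portion of a fixed contour $\Sigma$ one has genuine pointwise convergence of the integrand. The contour must extend to $+\infty$; for the tail one needs decay.

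The main obstacle, as usual in such hard-edge limits, is justifying the interchange of limit and (double) integral: producing an $n$-uniform integrable dominating function on the product contour $(-1/2+i\mathbb R)\times\Sigma$. For this I would obtain explicit bounds on $|\Gamma(n-s)/\Gamma(n-t)|$ uniform in $n$ — e.g. via $|\Gamma(n-s)/\Gamma(n-t)| \leq C\,|n|^{\Re(t-s)}(1+|s|)^{A}(1+|t|)^{A}$ type estimates from Stirling with explicit error control — and combine them with the decay of $\prod_j \Gamma(s+1+\nu_j)/\Gamma(t+1+\nu_j)$ and of $1/\sin(\pi s)$ along the vertical line (exponential decay in $|\Im s|$) and of $1/\sin(\pi t)$ along the parts of $\Sigma$ going to $+\infty$ (where $|\Im t|$ stays bounded but $\Re t \to +\infty$, so one instead uses that $x^t$ with $0<x<1$... wait, $x$ is fixed positive, not necessarily $<1$: so here one genuinely needs the Gamma product $1/\prod\Gamma(t+1+\nu_j)$ together with $\Gamma(t+l-n+1)$ — better, after the reflection rewriting, $\Gamma(n-t)$ in the denominator grows superexponentially in $\Re t$, controlling $x^t$). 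The factor $1/(s-t)$ is harmless since $\Re s = -1/2 < \Re t$ keeps $s-t$ bounded away from $0$. Once a dominating function is in hand, dominated convergence finishes the proof, and the uniformity for $x,y$ in compact subsets of $(0,\infty)$ follows because the $x,y$-dependence enters only through the factors $x^t$, $y^{-s-1}$, which are uniformly bounded and equicontinuous there. I would also remark that the Meijer-G representation \eqref{eq: meijer_rep_knxy} gives an alternative, perhaps shorter, route: rescaling inside \eqref{eq: meijer_rep_knxy} and using the known asymptotics of the relevant Meijer G-functions as the large parameters $n$ and $l-n$ tend to infinity, one should recover \eqref{eq: Knu_integral}; but the double-integral approach via \eqref{eq: int_rep_knxy} is the one that most directly parallels \cite{Kuijlaars_Zhang} and is what I would write out in detail.
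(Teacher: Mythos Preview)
Your proposal is correct and follows essentially the same route as the paper: substitute the rescaled arguments into \eqref{eq: int_rep_knxy}, apply the reflection formula to convert $\Gamma(t+1-n)/\Gamma(s+1-n)$ into $\tfrac{\Gamma(n-s)}{\Gamma(n-t)}\tfrac{\sin\pi s}{\sin\pi t}$, use the ratio asymptotics $\Gamma(n-s)/\Gamma(n-t)\sim n^{t-s}$ and $\Gamma(t+l-n+1)/\Gamma(s+l-n+1)\sim(l-n)^{t-s}$ to cancel the scaling factor, deform $\Sigma$ to the unbounded contour, and finish with dominated convergence. The paper does exactly this, only it is terser and for the final domination step simply refers to \cite[Theorem 5.3]{Kuijlaars_Zhang} rather than writing out the bounds you sketch.
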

\begin{proof}
By using identity \eqref{eq: int_rep_knxy}, we know
\begin{multline*}
\frac{1}{(l - n)n} K_n \left(\frac{x}{(l - n)n}, \frac{y}{(l - n)n}\right) = 
\frac{1}{(2\pi i)^2} \int_{-1/2 - i\infty}^{-1/2 + i\infty} ds 
\oint_{\Sigma} dt \prod_{j = 0}^M \frac{\Gamma(s + 1 + \nu_j)}{\Gamma(t+1+\nu_j)} \\
 \frac{\Gamma(t+1-n) \Gamma(t + l - n + 1)}{\Gamma(s+1-n) \Gamma(s + l - n + 1)} \frac{x^t y^{-s-1}}{s-t}(l - n)^{s-t}n^{s-t}.
\end{multline*}
Euler's reflection formula tells us that
\begin{equation*}
\Gamma(z)\Gamma(1-z) = \frac{\pi}{\sin(\pi z)},
\end{equation*}
and so we see that
\begin{equation*}
\frac{\Gamma(t-n+1)}{\Gamma(s-n+1)} = \frac{\Gamma(n-s)}{\Gamma(n-t)} \frac{\sin(\pi s)}{\sin(\pi t)}.
\end{equation*}
Furthermore, as $n \to \infty$ we also know \cite[formula 5.11.13]{Olver:2010}
\begin{equation*}
\frac{\Gamma(n-s)}{\Gamma(n-t)} = n^{t-s} \left(1 + O(n^{-1})\right)
\end{equation*}
and similarly
\begin{equation*}
\frac{\Gamma(t + l - n + 1)}{\Gamma(s + l - n + 1)} = (l - n)^{t-s} \left(1 + O((l - n)^{-1})\right).
\end{equation*}
Hence, if we deform the contour $\Sigma$ to a two sided, unbounded contour as in Figure \ref{fig: plot_contours_int_k_nu_m} 
and apply the identities above, we immediately obtain  identity \eqref{eq: lim_knxy}, provided that we can take 
the limit inside the integral. This can be justified by using the dominated convergence theorem 
(see \cite[Theorem 5.3]{Kuijlaars_Zhang} for details).
\end{proof}

\section{Borodin Biorthogonal Ensembles} \label{sec: Bor_biorth_ens}

In this final section we consider the biorthogonal ensembles \eqref{eq:Bor1}
that were studied by Borodin in \cite{Borodin}, see also \cite{Claeys_Romano,Muttalib}. 
These are  determinantal point process on $[0,\infty)$, whose correlation kernels are expected to have
interesting scaling limits at the hard edge $x = 0$. This was proved in \cite{Borodin} for the cases where $w$ 
is either a special Jacobi weight
\begin{equation} \label{Bor2} 
	w(x) = \begin{cases} x^{\alpha}, &  0 < x \leq 1, \\
	0, & x > 1, \end{cases} \qquad \alpha > -1, 
	\end{equation}
or a Laguerre weight
\begin{equation} \label{Bor3} 
	w(x) = x^{\alpha} e^{-x} \qquad x > 0, \qquad \alpha > -1.
	\end{equation}
In both cases it was shown that a scaling limit at the origin leads to the following correlation kernel
that depends on $\alpha$ and $\theta$,
\begin{equation} \label{Bor4} 
	K^{(\alpha, \theta)}(x,y) = 
	\theta x^{\alpha} \int_0^1 J_{\frac{\alpha+1}{\theta}, \frac{1}{\theta}} (xu)
		J_{\alpha+1, \theta} ((y u)^{\theta}) u^{\alpha} du. 
		\end{equation}
where $J_{a,b}$ is Wright's generalization of the Bessel function given by
\begin{equation} \label{Bor5} 
	J_{a,b}(x) = \sum_{j=0}^{\infty} \frac{(-x)^j}{j! \Gamma(a+j b)}. 
	\end{equation}
The kernels \eqref{Bor4} are related to the Meijer G-kernel $K_{\nu_1, \ldots, \nu_M}$ 
in case $\theta$ or $1/\theta$ is an integer. This is our final result.

\begin{theorem} \label{thm:theta-ensemble} Let $M \geq 1$ be an integer. 
\begin{enumerate}
\item[\rm (a)] Then we have 
\begin{equation} \label{Bor6} 	
	M^M K^{(\alpha,\frac{1}{M})}(M^M x, M^M y)  = \\
	 \left(\frac{x}{y} \right)^{\alpha}  K_{\nu_1, \ldots, \nu_M}(x,y) 
	\end{equation}
	with parameters 
\begin{equation} \label{Bor7}  
	\nu_j = \alpha + \frac{j-1}{M}, \qquad j=1, \ldots, M.
	\end{equation}
\item[\rm (b)]	
	We also have
\begin{equation} \label{Bor8} 
	x^{\frac{1}{M}-1} K^{(\alpha,M)}(M x^{\frac{1}{M}}, M y^{\frac{1}{M}}) = 
		K_{\tilde{\nu}_1, \ldots, ,\tilde{\nu}_M}(y,x) 
		\end{equation}
	with parameters
\begin{equation} \label{Bor9}  
	\tilde{\nu}_j = \frac{\alpha}{M} - 1 + \frac{j}{M}, \qquad j=1, \ldots, M.
	\end{equation}
	\end{enumerate}
\end{theorem}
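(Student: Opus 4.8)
The strategy is to express each of the two Wright‑type Bessel functions in Borodin's kernel \eqref{Bor4} as a Meijer $G$‑function — which is possible precisely because $\theta$ or $1/\theta$ is an integer — and then to recognise the resulting $u$‑integral as the Meijer $G$‑kernel written in the form \eqref{eq: Knu_integral}. The one analytic input that makes the translation work is the Gauss multiplication formula
\[
  \Gamma(Mz) \;=\; (2\pi)^{(1-M)/2}\, M^{\,Mz-1/2}\,\prod_{k=0}^{M-1}\Gamma\!\Bigl(z+\tfrac{k}{M}\Bigr),
\]
which turns a single gamma whose argument advances in steps of $1/M$ (or of $M$) into the product of $M$ gammas occurring in $K_{\nu_1,\ldots,\nu_M}$.

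For part (a) I would first establish the two identities
\begin{align*}
  \MeijerG{1}{0}{0}{M+1}{-}{0,-\nu_1,\ldots,-\nu_M}{z}
    &= \frac{M^{\,M(\alpha+1)-1/2}}{(2\pi)^{(M-1)/2}}\,
       J_{M(\alpha+1),\,M}\!\bigl(M^{M}z\bigr), \\
  \MeijerG{M}{0}{0}{M+1}{-}{\nu_1,\ldots,\nu_M,0}{z}
    &= \frac{(2\pi)^{(M-1)/2}}{M^{1/2}}\; z^{\alpha}\,
       J_{\alpha+1,\,1/M}\!\bigl(M z^{1/M}\bigr),
\end{align*}
valid for $\nu_j=\alpha+\frac{j-1}{M}$. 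The first is immediate from \eqref{Bor5}: the function $G^{1,0}_{0,M+1}$ above equals $\sum_{n\ge0}(-z)^{n}\big/\bigl(n!\prod_{j=1}^{M}\Gamma(1+\nu_j+n)\bigr)$, and the multiplication formula collapses the product over $j$. The second I would prove by comparing Mellin transforms: the Mellin transform of $G^{M,0}_{0,M+1}$ above is $\prod_{j=1}^{M}\Gamma(\nu_j+s)\big/\Gamma(1-s)$, while on the right one uses $\int_0^{\infty}v^{\sigma-1}J_{a,b}(v)\,dv=\Gamma(\sigma)/\Gamma(a-b\sigma)$ (also a consequence of \eqref{Bor5}) together with the substitution $v=Mz^{1/M}$; the multiplication formula makes the two Mellin transforms agree, the factor $z^{\alpha}$ being produced by the leading pole of the Meijer $G$. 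The two analogous identities used in part (b), with parameters $\tilde\nu_j=\frac{\alpha}{M}-1+\frac{j}{M}$, are obtained the same way, with the roles of $G^{1,0}_{0,M+1}$ and $G^{M,0}_{0,M+1}$ interchanged.

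With these identities in hand, part (a) follows by inserting them into \eqref{Bor4} with $\theta=1/M$ and arguments $(M^{M}x,M^{M}y)$: the factors $J_{M(\alpha+1),M}(M^{M}xu)$ and $J_{\alpha+1,1/M}\bigl((M^{M}yu)^{1/M}\bigr)=J_{\alpha+1,1/M}\bigl(M(yu)^{1/M}\bigr)$ turn into the two Meijer $G$‑functions evaluated at $xu$ and $yu$, and the prefactor $\theta x^{\alpha}=\frac1M(M^{M}x)^{\alpha}$, the $u^{\alpha}$ in the integrand, the power $(yu)^{\alpha}$ released by the second identity, and all powers of $M$ and of $2\pi$ collapse to $M^{-M}(x/y)^{\alpha}$; what remains is exactly the integral \eqref{eq: Knu_integral}, giving \eqref{Bor6}. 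For part (b) one starts from \eqref{Bor4} with $\theta=M$ and arguments $(Mx^{1/M},My^{1/M})$ and first makes the change of variable $u\mapsto u^{M}$ in the $u$‑integral; this is essential, because it brings the arguments of the two Wright functions into the forms $M(xu)^{1/M}$ and $M^{M}yu$ to which the part‑(b) identities apply (it also replaces $u^{\alpha}\,du$ by a constant times $u^{\frac{\alpha+1}{M}-1}\,du$). Collecting constants then produces $x^{1-1/M}$ times the integral \eqref{eq: Knu_integral} but with the two Meijer $G$‑factors — hence the arguments $x$ and $y$ — interchanged, which is why \eqref{Bor8} features $K_{\tilde\nu_1,\ldots,\tilde\nu_M}(y,x)$.

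I expect the main obstacle to be purely one of bookkeeping: forcing every power of $M$, every factor $(2\pi)^{(M-1)/2}$, and every fractional power of $x$, $y$ and $u$ to cancel exactly as claimed, and — in part (b) — carrying out the change of variable $u\mapsto u^{M}$ without exponent slips. There is also a minor point of rigour in justifying that the Mellin transforms involved share a common vertical strip of validity and that the interchanges of summation/integration and the contour deformations are legitimate; but since \eqref{eq: Knu_integral} and the double‑integral representation of $K_{\nu_1,\ldots,\nu_M}$ are already available from the earlier sections, these reduce to routine checks.
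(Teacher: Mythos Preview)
Your proposal is correct and follows essentially the same route as the paper: both rewrite the two Wright functions in \eqref{Bor4} as Meijer $G$-functions via the Gauss multiplication formula and then identify the result with the integral form \eqref{eq: Knu_integral} of $K_{\nu_1,\ldots,\nu_M}$. The only notable differences are cosmetic: the paper states the Wright--Meijer identities in the form $J_{a,M}=\text{const}\cdot G^{1,0}_{0,M+1}$ and $J_{a,1/M}=\text{const}\cdot G^{M,0}_{0,M+1}$ (citing \cite{GLM99,GLM2000}) and applies the shift rule \eqref{eq: MeijerG_timesxrho} afterwards to absorb $u^{\alpha}$, whereas you build the $z^{\alpha}$ shift directly into your second identity; and for part~(b) the paper, instead of your substitution $u\mapsto u^{M}$, offers the shortcut of deducing it from part~(a) via the easily checked symmetry
\[
  \tfrac{1}{\theta}\, x^{\frac{1}{\theta}-1}\, K^{(\alpha,\theta)}\bigl(x^{1/\theta},y^{1/\theta}\bigr)
  \;=\; \Bigl(\tfrac{x}{y}\Bigr)^{\alpha'} K^{(\alpha',1/\theta)}(y,x),
  \qquad \alpha'=\tfrac{\alpha+1}{\theta}-1,
\]
which spares the bookkeeping you anticipate.
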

The parameters \eqref{Bor7} and \eqref{Bor9} come in an arithmetic progression with step size $1/M$, and
therefore they cannot all be integers if $M \geq 2$. This is in contrast to the limiting kernels
obtained from the products of random matrices where the $\nu_j$ are necessarily integers.

\begin{proof}
If $b$ is a rational number, then \eqref{Bor5} can be expressed as a Meijer G-function, 
see \cite[formula (22)]{GLM99} and \cite[formula (13)]{GLM2000}. 
For the case when $b=M$ with $M \geq 2$ an integer, we have
\begin{align} \nonumber
	J_{a, M}(x) & = (2\pi)^{\frac{M-1}{2}} M^{-a + 1/2} \frac{1}{2\pi i} 
	\int_{c-i\infty}^{c+i\infty} \frac{\Gamma(s)}{\prod_{j=0}^{M-1}  \Gamma\left(\frac{a}{M}-s+\frac{j}{M}\right)}  \left( \frac{x}{M^M}\right)^{-s} ds  \\
	& = \label{Bor6_2}  (2\pi)^{\frac{M-1}{2}} M^{-a + 1/2} \MeijerG{1}{0}{0}{M+1}{-}{0,  -\frac{a}{M}+\frac{1}{M}, -\frac{a}{M}+\frac{2}{M}, \ldots, -\frac{a}{M} + 1 }{\frac{x}{M^M}}
\end{align}
and for $b=1/M$,
\begin{align} \nonumber 
	J_{a, \frac{1}{M}}(x) & = (2\pi)^{- \frac{M-1}{2}} M^{1/2} \frac{1}{2\pi i}
	\oint_{\Sigma} \frac{\prod_{k=0}^{M-1} \Gamma\left(t+ \frac{k}{M}\right)}{\Gamma(a-t)} \left( \frac{x^M}{M^M} \right)^{-t} dt \\
	\label{Bor7_2} 
& =  (2\pi)^{- \frac{M-1}{2}} M^{1/2} \MeijerG{M}{0}{0}{M+1}{-}{0, \frac{1}{M}, \ldots, \frac{M-1}{M}, 1-a}{\frac{x^M}{M^M}}
\end{align}
where $\Sigma$ is a contour encircling the negative real axis. 

Inserting \eqref{Bor6_2} and \eqref{Bor7_2} into \eqref{Bor4} we obtain for $\theta = 1/M$ with a positive integer $M$, 
\begin{multline} 
K^{(\alpha,\frac{1}{M})}(x, y) \\ 
 = M^{-(\alpha + 1)M} x^{\alpha} \int_0^1 
	\MeijerG{1}{0}{0}{M+1}{-}{0,  - \alpha, - \alpha - \frac{1}{M}, \ldots, -\alpha - \frac{M-1}{M}}{\frac{ux}{M^M}}  \\
  \times \MeijerG{M}{0}{0}{M+1}{-}{0, \frac{1}{M}, \ldots, \frac{M-1}{M}, -\alpha}{\frac{uy}{M^M}} u^{\alpha} du, 
	\label{Bor8_2}
\end{multline}
and after a rescaling of variables $x \mapsto M^M x$, $y \mapsto M^M y$,
\begin{multline}
	M^M K^{(\alpha,\frac{1}{M})}(M^M x, M^M y)  \\
	= x^{\alpha} \int_0^1 \MeijerG{1}{0}{0}{M+1}{-}{0,  - \alpha, - \alpha - \frac{1}{M}, \ldots, -\alpha - \frac{M-1}{M}}{ux}  \\
		\times \MeijerG{M}{0}{0}{M+1}{-}{0, \frac{1}{M}, \ldots, \frac{M-1}{M}, -\alpha}{uy} u^{\alpha} du. \label{Bor9_2}
\end{multline}
Then we realize that for a Meijer G-function $G(z)$, we have that $z^{\alpha} G(z)$ is again a 
Meijer G-function with parameters shifted by $\alpha$, see formula \eqref{eq: MeijerG_timesxrho}. Thus by \eqref{Bor9_2}
\begin{multline}
M^M K^{(\alpha,\frac{1}{M})}(M^M x, M^M y)   \nonumber \\
 = \left(\frac{x}{y}\right)^{\alpha} \int_0^1 
	\MeijerG{1}{0}{0}{M+1}{-}{0,  - \alpha, - \alpha - \frac{1}{M}, \ldots, -\alpha - \frac{M-1}{M}}{ux} \nonumber \\
	\times \MeijerG{M}{0}{0}{M+1}{-}{\alpha, \alpha + \frac{1}{M}, \ldots, \alpha + \frac{M-1}{M}, 0}{uy} du \label{Bor10}
\end{multline}
which proves part (a) of the theorem because of \eqref{eq: Knu_integral}.
\medskip

Part (b) follows in a similar way. Alternatively it can be obtained from part (a)
because of the formula
\[ \frac{1}{\theta} x^{\frac{1}{\theta} - 1} 
	K^{(\alpha, \theta)}(x^{\frac{1}{\theta}}, y^{\frac{1}{\theta}}) 
	= \left(\frac{x}{y} \right)^{\alpha'} K^{(\alpha', \frac{1}{\theta})}(y,x), 
		\qquad \alpha' = \frac{\alpha+1}{\theta} - 1 \]
	which can be easily deduced from \eqref{Bor4}.
\end{proof}

\section*{Acknowledgements}

We thank Peter Forrester for useful discussions and providing us with a 
copy of \cite{Forrester_Liu}.

The authors are supported by KU Leuven Research Grant OT/12/073 and the Belgian Interuniversity
Attraction Pole P07/18. The first author is also supported by FWO Flanders projects G.0641.11 and G.0934.13, and by Grant
No. MTM2011-28952-C02 of the Spanish Ministry of Science and Innovation.


\appendix
\section{The Meijer G-function} \label{appendix: meijer_g}

For ease of reference, we collect in this appendix the definition and properties of the Meijer G-function that are
used in this paper. By definition, the Meijer G-function is given by the following contour integral:
\begin{equation} \label{eq: def_meijer_g}
	\MeijerG{m}{n}{p}{q}{a_1, \ldots, a_p}{b_1, \ldots, b_q}{z} = 
	\frac{1}{2\pi i} \int_{L} \frac{\prod_{j = 1}^m \Gamma(s + b_j) \prod_{j = 1}^n \Gamma(1 - a_j - s)}
	{\prod_{j = m+1}^q \Gamma(1 - b_j - s) \prod_{j = n + 1}^p \Gamma(s + a_j)} z^{-s} ds,
\end{equation}
where the branch cut of $z^{-s}$ is taken along the negative $x$-axis. Furthermore, it is also assumed that
\begin{itemize}
\item $m, n, p, q$ are integers such that $0 \leq p \leq n$ and $0 \leq q \leq m$;
\item the real (or complex) parameters $a_1, \ldots, a_p$ and $b_1, \ldots, b_q$ satisfy the conditions
\begin{equation*}
	a_k - b_j \neq 1, 2, 3, \ldots \text{ for } k = 1, \ldots, n \text{ and } j = 1, \ldots, m.
\end{equation*}
I.e., none of the poles of $\Gamma(b_j + u)$ coincides with any of the poles of $\Gamma(1 - a_k - u)$. 
\end{itemize} 
The contour $L$ is such that all the poles of $\Gamma(u + b_j)$ are on the left of the path 
while the poles of $\Gamma(1 - a_j + u)$ are on the right of the path. In typical situations
the contour is a vertical line $c + i \mathbb R$ with $c > 0$.

The Mellin transform of an integrable function $w$ on $[0, \infty)$ is
\[ (\mathcal M w)(s) = \int_0^{\infty} x^{s-1} w(x) dx, \qquad a < \Re s < b. \]
The inverse Mellin transform is
\[ w(x) = \frac{1}{2\pi i} \int_{c-i\infty}^{c+i\infty} (\mathcal M w)(s) x^{-s} ds, \]
where $a < c < b$. Thus for a Meijer G-function which is defined and
integrable on the positive half-line we have 
\begin{equation} \label{eq: MeijerG_Mellin_transform} 
	\int_0^{\infty} x^{s-1} \MeijerG{m}{n}{p}{q}{a_1, \ldots, a_p}{b_1, \ldots, b_q}{x} dx 
		= \frac{\prod_{j = 1}^m \Gamma(s + b_j) \prod_{j = 1}^n \Gamma(1 - a_j - s)}
		{\prod_{j = m+1}^q \Gamma(1 - b_j - s) \prod_{j = n + 1}^p \Gamma(s + a_j)}. 
		\end{equation}
		
The Mellin convolution of two Meijer G-functions is again a Meijer G-function. A special
case of this is
\begin{equation} \label{eq: MeijerG_convolution1}
		\int_{0}^{\infty} x^{\nu-1}  e^{-x} 
			\MeijerG{m}{n}{p}{q}{a_1, \ldots, a_p}{b_1, \ldots, b_q}{ \frac{y}{x}} dx 
			= \MeijerG{m+1}{n}{p}{q+1}{a_1, \ldots, a_p}{\nu, b_1, \ldots, b_q}{y},
\end{equation}
provided that the integral in the left-hand side converges.

Further identities are 
\begin{equation} \label{eq: MeijerG_timesxrho}
	x^{\alpha} \MeijerG{m}{n}{p}{q}{a_1, \ldots, a_p}{b_1, \ldots, b_q}{x} = 
	\MeijerG{m}{n}{p}{q}{a_1 + \alpha, \ldots, a_p + \alpha}{b_1 + \alpha, \ldots, b_q + \alpha}{x}
\end{equation}
and
\begin{equation} \label{eq: MeijerG_xinverse}
	\MeijerG{m}{n}{p}{q}{a_1, \ldots, a_p}{b_1, \ldots, b_q}{x^{-1}} = 
	\MeijerG{n}{m}{q}{p}{1-b_1, \ldots, 1-b_q}{1-a_1, \ldots, 1-a_p}{x}.
	\end{equation}
For more details, we refer the reader to \cite{Beals_Szmigielski, Luke}.


\end{document}